\theoremstyle{thmstyleone}%
\newtheorem{theorem}{Theorem}[section]%  meant for continuous numbers  
\newtheorem{lemma}[theorem]{Lemma}
\newtheorem{problem}[theorem]{Problem} 
\newtheorem{hypothesis}[theorem]{Hypothesis} 
\theoremstyle{thmstyletwo}%
\newtheorem{example}{Example}% 
\theoremstyle{thmstylethree}%
\newtheorem*{remark}{Remark}%
\renewcommand{\A}{\mathrm{A}} \newcommand{\AGL}{\mathrm{AGL}}  \newcommand{\Alt}{\mathrm{Alt}} \newcommand{\Aut}{\mathrm{Aut}}
 \newcommand{\Cen}{\mathbf{C}}        
\newcommand{\D}{\mathrm{D}}
\newcommand{\M}{\mathrm{M}} \newcommand{\magma}{\textsc{Magma}}
\newcommand{\Out}{\mathrm{Out}}
  \newcommand{\PGL}{\mathrm{PGL}} \newcommand{\PGaL}{\mathrm{P\Gamma L}}     \newcommand{\PSL}{\mathrm{PSL}}
 \newcommand{\SL}{\mathrm{SL}}  \newcommand{\Soc}{\mathrm{Soc}}     \newcommand{\Sy}{\mathrm{S}} \newcommand{\Sym}{\mathrm{Sym}} 
 \newcommand{\ZZ}{\mathrm{C}}
\renewcommand{\a}{\alpha}
\numberwithin{equation}{section}  
\newcommand{\Fix}{\mathrm{Fix}}
\begin{document}

\title{Block-transitive $3$-$(v,k,1)$ designs associated  with alternating groups}

%%=============================================================%%
%% Prefix	-> \pfx{Dr}
%% GivenName	-> \fnm{Joergen W.}
%% Particle	-> \spfx{van der} -> surname prefix
%% FamilyName	-> \sur{Ploeg}
%% Suffix	-> \sfx{IV}
%% NatureName	-> \tanm{Poet Laureate} -> Title after name
%% Degrees	-> \dgr{MSc, PhD}
%% \author*[1,2]{\pfx{Dr} \fnm{Joergen W.} \spfx{van der} \sur{Ploeg} \sfx{IV} \tanm{Poet Laureate} 
%%                 \dgr{MSc, PhD}}\email{iauthor@gmail.com}
%%=============================================================%%
%
%\author[Lan]{Ting Lan}
%\address{Ting Lan\\
%School of Mathematics and Statistics\\
%Central South University\\
%Changsha 410083, Hunan, P.R. China.}
%\email{lanting0603@163.com}
%
%
%\author[Liu]{Weijun Liu}
%\address{Weijun Liu \\
%School of Mathematics and Statistics\\
%Central South University\\
%Changsha 410083, Hunan, P.R. China.}
%\email{wjliu6210@126.com}
%
%\author[Yin]{Fu-Gang Yin}
%\address{Fu-Gang Yin\\
%School of Mathematics and Statistics\\
%Central South University\\
%Changsha 410083, Hunan, P.R. China.}
%\email{18118010@bjtu.edu.cn}
%

\author[1]{\fnm{Ting} \sur{Lan}}\email{lanting0603@163.com}
\equalcont{These authors contributed equally to this work.}

\author[2,1]{\fnm{Weijun} \sur{Liu}}\email{wjliu6210@126.com}
\equalcont{These authors contributed equally to this work.}

\author*[1]{\fnm{Fu-Gang} \sur{Yin}}\email{18118010@bjtu.edu.cn}
\equalcont{These authors contributed equally to this work.}

\affil[1]{\orgdiv{School of Mathematics and Statistics}, \orgname{Central South University}, \orgaddress{\street{Yuelu South Road}, \city{Changsha}, \postcode{410083}, \state{Hunan}, \country{P.R. China}}}
 
\affil[2]{\orgdiv{College of General Education}, \orgname{Guangdong University of Science and Technology}, \orgaddress{\city{Dongguan}, \postcode{523083}, \state{Guangdong}, \country{P.R. China}}}
 
%%==================================%%
%% sample for unstructured abstract %%
%%==================================%%

\abstract{
Let $\mathcal{D}$ be a  nontrivial $3$-$(v,k,1)$ design admitting a  block-transitive  group  $G$ of automorphisms.
A recent work of Gan and the second author asserts that $G$ is either affine or almost simple.  
In this paper, it is proved that if $G$ is almost simple with socle an alternating group, then  $\mathcal{D}$ is the unique $3$-$(10,4,1)$ design, and $G=\PGL(2,9)$, $\M_{10}$ or $\mathrm{Aut}(\mathrm{A}_6 )=\mathrm{S}_6:\mathrm{Z}_2$, and $G$ is flag-transitive.
%This combine with a result of Kantor and a work of Camina, Neumann and Praeger completes the classification of   $G$-$(v,k,1)$ design admitting a  block-transitive  group  $G$ of automorphisms. 
%Let $\mathcal{D}$ be a  nontrivial $t$-$(v,k,1)$ design admitting a  block-transitive  group  $G$ of automorphisms, where $G$ is almost simple with socle an alternating group.
%A result of Kantor suggests that $t<4$, and the case $t=2$ has been settled by Camina, Neumann and Praeger. 
%In this paper, it is proved that if $t=3$ then $G$ is the  automorphism group of $\A_6$, and $\mathcal{D}$ is $G$-flag-transitive with parameters $v=10$ and $k=4$. 
}

\keywords{block-transitive designs, $t$-$(v,k,1)$ designs, Steiner $t$-designs, alternating groups, primitive groups}

%%\pacs[JEL Classification]{D8, H51}

\pacs[MSC Classification]{05B05, 20B25}

\maketitle

\section{Introduction}\label{sec1}
A $t$-$(v,k,\lambda)$ design  $\mathcal{D}= (\mathcal{P},\mathcal{B})$  is an incidence structure consisting of a set $\mathcal{P}$ of $v$  \emph{points} and a set $\mathcal{B}$ of  $k$-subsets of $\mathcal{P}$  called \emph{blocks}  such that, each block in $\mathcal{B}$ has size $k$, and each $t$-subset of $\mathcal{P}$ lies in exactly $\lambda$ blocks from $\mathcal{B}$.
Design $\mathcal{D}$ is said to be \emph{trivial} if $\mathcal{B}$ consists of all the $k$-subsets of $\mathcal{P}$.
A flag of $\mathcal{D}$ is a  pair $(\alpha,B)$ where $\alpha $ is a point and $B$ is a block containing $\alpha$.
An automorphism of $\mathcal{D}$ is a permutation on $\mathcal{P}$ which permutes the blocks among themselves.
For a subgroup $G$ of the automorphism group $\Aut(\mathcal{D})$ of $\mathcal{D}$, the design $\mathcal{D}$ is said to be \emph{$G$-block-transitive} if $G$ acts transitively on the set of blocks, and is said to be \emph{block-transitive} if it is $\Aut(\mathcal{D})$-block-transitive.
The point-transitivity and flag-transitivity are defined similarly.
Clearly, flag-transitivity infers block-transitivity, and by a theorem of Block (see~\cite{Block1967}) block-transitivity implies  point-transitivity.

For a nontrivial $G$-block-transitive $t$-$(v,k,\lambda)$ design, it is proved by Cameron and Praeger~\cite{CP1993lagert}  that $G$ is $\lfloor t/2 \rfloor$-homogeneous on the point set and  $t\leq 7$, and while if $G$  is flag-transitive, then $G$ is $\lfloor (t+1)/2 \rfloor$-homogeneous and $t\leq 6$.
Furthermore, Cameron and Praeger~\cite{CP1993lagert} conjectured that there exists no nontrivial block-transitive $6$-$(v,k,\lambda)$ designs.
%Note that a $7$-$(v,k,\lambda)$ design is a $6$-$(v,k,\lambda')$ for some $\lambda'$.
Huber~\cite{H2010t=7} confirmed the nonexistence of nontrivial block-transitive $7$-$(v,k,1)$ designs, and then in~\cite{H2010} he proved that apart from $G=\mathrm{P\Gamma L}_2(p^e)$ with $p\in \{2,3\}$ and $e$ an odd prime power, there exists no nontrivial $G$-block-transitive $6$-$(v,k,1)$ designs. 
The exceptional case was studied by Tan, Liu and Chen~\cite{TLC2014} for the case $k\leq 10000$.
 
%In this paper, we focus on the $t$-$(v,k,\lambda)$ designs, which is also called \emph{Steiner $t$-designs}.
 A $t$-$(v,k,\lambda)$ design with $\lambda=1$ is called a \emph{Steiner $t$-design}, and a $2$-$(v,k,1)$ design is also called a \emph{linear space}. 
For a $t$-$(v,k,1)$ design, we need
$t\geq 2$ and $t<k<v$ to avoid trivial examples, and so a $t$-$(v,k,1)$ design is said to be \emph{nontrivial} if $t\geq 2$ and $t<k<v$.  
There have been a great deal of efforts to classify block-transitive $2$-$(v,k,1)$ designs in the past fifty years.
For example, point $2$-transitive $2$-$(v,k,1)$ designs are classified by Kantor~\cite{K1985}, and flag-transitive $2$-$(v,k,1)$ designs apart from those with one-dimensional affine automorphism groups are classified by  Buekenhout et. al~\cite{BDDKLS1990}.
In 2001, Camina and Praeger~\cite{CP2001} proved that
if a $2$-$(v,k,1)$ design  is $G$-block-transitive and $G$-point-quasiprimitive, then $G$ is either an affine group or an almost simple group.  
This result has inspired the study of $G$-block-transitive $2$-$(v,k,1)$ designs  with $G$ an almost simple group, such as 
alternating groups~\cite{CNP2003},  sporadic simple groups~\cite{CS2000},  simple groups of Lie type of  small ranks~\cite{G2007, L2001, L2003, L2003a, L2003b,LLG2006, LLM2001, LZLF2004,   Z2002, Z2005,ZLL2000}, and large dimensional classical groups~\cite{CGZ2008}. 
%and the complete classification results include alternating groups~\cite{CNP2003}, sporadic simple groups~\cite{CS2000},  two-dimensional projective linear groups~\cite{L2003}, Ree groups ${}^2\G_2(q)$~\cite{LZLF2004}, Steinberg groups ${}^3\D_4(q)$~\cite{LDG2006} 

Compared with $2$-$(v,k,1)$ designs, the results for block-transitive $t$-$(v,k,1)$ designs with $t \in \{3,4,5\}$ are rare.
The flag-transitive  $t$-$(v,k,1)$ designs for $t\in \{3,4,5\}$ has been classified by Huber~\cite{H2005,H2007-4flag,H2009-5flag}.
Note that $1$-homogenous groups are simply transitive groups, while $2$-homogenous groups are almost  $2$-transitive (with only one affine group as exception, see~\cite{K1972}), and in particular, $2$-homogenous groups are either affine or almost simple. 
Consequently, for a  $G$-block-transitive $t$-$(v,k,1)$ design with $t\in \{4,5\}$, $G$ is either affine or almost simple.
Huber~\cite{H2010affine} proved that if $G$ is an affine group, then there exists no $G$-block-transitive  nontrivial $t$-$(v,k,1)$ design for $t\in \{4,5\}$  except the one-dimensional affine case. 
Thus the study of $G$-block-transitive $t$-$(v,k,1)$ design with $t\in \{4,5\}$ has been essentially reduced to the case that $G$ is an almost simple $2$-transitive group.
Based on the study of Cameron and Praeger~\cite{CP1993} on block-transitive and point-imprimitive $t$-$(v,k,\lambda)$ designs, it is proved by Mann and Tuan~\cite[Corollary~2.3(a)]{MT2001} that there exists no block-transitive and point-imprimitive $3$-$(v,k,1)$ design. 
Very recently, it is shown by Gan and the second author~\cite{GL2022+}  that for a nontrivial $G$-block-transitive $3$-$(v,k,1)$ design, the group $G$ is either affine or almost simple. 
This suggests the following problem. 

\begin{problem}\label{problem}
Classify nontrivial $G$-block-transitive $3$-$(v,k,1)$ designs, where $G$ is an almost simple group.
\end{problem}
 
Those $\PSL_n(q)$-block-transitive $3$-$(v,k,1)$ designs with $v=(q^n-1)/(q-1)$ are determined by Tang, Liu and Wang~\cite{TLW2013}. This paper is devoted to solve Problem~\ref{problem} in the case where the socle of $G$ is an alternating group. 
Note that $\Aut(\A_6) =\mathrm{S}_6:\mathrm{Z}_2$, and following Atlas~\cite{Atlas},  $\Aut(\A_6) $ has three subgroups of index $2$, namely $ \A_6.2_i$ with $i\in \{1,2,3\}$.
Moreover, $  \A_6.2_1 \cong\mathrm{P\Sigma L}_2(9) \cong \Sy_6$, $ \A_6.2_2\cong \PGL(2,9)$ and $ \A_6.2_3 \cong\M_{10}$ ($\M_{10}$ is the stabilizer of the Mathieu group $\M_{11}$ on its natural action of degree $11$).

\begin{theorem}\label{th:main}
Let $G$ be an almost simple group with alternating socle $\mathrm{A}_n(n\geq 5)$.
Suppose that  $\mathcal{D} $ is a nontrivial $G$-block-transitive $3$-$(v,k,1)$ design. 
Then $G=\mathrm{PGL}_2(9)$, $\mathrm{M}_{10}$ or $\mathrm{S}_6:\mathrm{Z}_2$,  and  $\mathcal{D}$ is $G$-flag-transitive with parameters $v=10$ and $k=4$.  
\end{theorem}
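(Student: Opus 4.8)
\noindent\emph{Outline of the intended argument.} The plan is to reduce to a point-primitive action, pass through the O'Nan--Scott list of primitive actions of an almost simple group with alternating socle, eliminate the infinite families by a mixture of a single combinatorial divisibility and elementary counting, and finish the finitely many residual configurations by direct inspection. First I would record the parameter identities of a $3$-$(v,k,1)$ design: every pair of points lies in $\lambda_2=(v-2)/(k-2)$ blocks, every point in $r=(v-1)(v-2)/((k-1)(k-2))$ blocks, and $b=v(v-1)(v-2)/(k(k-1)(k-2))$; all three are integers. Block-transitivity gives $b=|G:G_B|$ for a block $B$, whence $v(v-1)(v-2)\mid |G|\,k(k-1)(k-2)$ and in particular $b\le |G|=v|H|$ where $H=G_\alpha$. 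By the theorem of Mann and Tuan quoted above there is no block-transitive, point-imprimitive $3$-$(v,k,1)$ design, so $G$ is point-primitive and $H$ is a maximal subgroup with $v=|G:H|$.

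The local tool I would isolate is this: fixing two points $\alpha,\beta$, the $\lambda_2$ blocks containing $\{\alpha,\beta\}$ partition the remaining $v-2$ points into $\lambda_2$ parts of size $k-2$, and this partition is invariant under the two-point stabiliser $G_{\alpha\beta}$. Consequently every subdegree (orbit length) of $G_{\alpha\beta}$ on the $v-2$ remaining points is a multiple of $k-2$; in particular $k-2$ divides the greatest common divisor of these subdegrees, and $k-2$ is at most the smallest of them. This is the main lever for bounding $k$.

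With these in hand I would run through the maximal subgroups of $\mathrm{A}_n$ and $\mathrm{S}_n$: intransitive (stabiliser of an $m$-subset, $v=\binom{n}{m}$), imprimitive (partition stabiliser), primitive on $\{1,\dots,n\}$, and the finitely many exceptional maximal subgroups for small $n$, notably the novelty subgroups of $\mathrm{A}_6$ giving the degree-$10$ action. The natural action $m=1$ is immediate: there $G$ contains $\mathrm{A}_n$ acting naturally, which is transitive on $k$-subsets for every $k$, so the single block orbit is all $k$-subsets and $\mathcal{D}$ is trivial, contrary to hypothesis. For the $m$-subset and partition families ($m\ge 2$) I would compute the subdegrees of $G_{\alpha\beta}$; the divisibility just established forces $k-2$ to divide a small gcd, typically leaving only $k=4$, and then the integrality of $r$ and $b$ together with the explicit value of $v$ eliminates all but finitely many $n$. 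For a primitive point-stabiliser $H\neq\mathrm{A}_n,\mathrm{S}_n$ the Praeger--Saxl order bound $|H|<4^{n}$ makes $v=|G:H|$ grow super-exponentially, so the inequality $(v-1)(v-2)\le k(k-1)(k-2)|H|$ (a restatement of $b\le|G|$) fails for all but small $n$. In every branch the upshot is a bound on $n$, reducing the problem to a finite computation that I would carry out in \magma.

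The genuinely delicate situations, and the step I expect to be the main obstacle, are the $3$-transitive actions with alternating socle, where $G_{\alpha\beta}$ is transitive on the remaining $v-2$ points; there the divisibility lemma degenerates to $(k-2)\mid(v-2)$ and yields nothing beyond the integrality of $\lambda_2$. Besides the natural action these are $\mathrm{S}_5\cong\PGL_2(5)$ of degree $6$ and the degree-$10$ groups $\PGL_2(9)$, $\M_{10}$ and $\PGaL_2(9)=\mathrm{S}_6:\mathrm{Z}_2$. The degree-$6$ case dies because the corresponding $r$ is not an integer, while for degree $10$ the admissible block sizes are $k\in\{4,6\}$; enumerating the $G$-orbits on $4$- and $6$-subsets and testing the Steiner property that each $3$-subset lies in exactly one block singles out $v=10$, $k=4$. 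A final direct check confirms that the surviving design is the unique $3$-$(10,4,1)$ design (the inversive plane of order $3$), that each of $\PGL_2(9)$, $\M_{10}$ and $\mathrm{S}_6:\mathrm{Z}_2$ acts on it block-transitively, and that the action is in fact flag-transitive, as claimed.
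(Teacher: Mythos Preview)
Your overall architecture---point-primitivity via Mann--Tuan, then a case split over the Liebeck--Praeger--Saxl list of maximal subgroups---matches the paper's. Your handling of the natural action ($m=1$) by $k$-homogeneity of $\A_n$ is cleaner than the paper's citation of Kantor, and your endgame for the degree-$10$ actions is essentially the same as the paper's.

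The genuine gap is your ``local tool''. From the fact that the blocks through $\{\alpha,\beta\}$ induce a $G_{\alpha\beta}$-invariant partition of $\mathcal{P}\setminus\{\alpha,\beta\}$ into parts of size $k-2$, it does \emph{not} follow that every $G_{\alpha\beta}$-orbit length on these $v-2$ points is a multiple of $k-2$. A group preserving an equal-block partition can have orbits strictly finer than the parts: for $\gamma$ in a part $P=B\setminus\{\alpha,\beta\}$ one only gets $G_{\alpha\beta\gamma}\le G_{\alpha\beta}\cap G_B$, so the $G_{\alpha\beta}$-orbit of $\gamma$ may be a proper subset of the union of parts in the orbit of $P$. (Trivially, the identity subgroup preserves every partition but has singleton orbits.) Since this divisibility is your main lever for bounding $k$ in the intransitive and imprimitive families, those two branches are currently unsupported.

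The paper replaces this with three correct ingredients that you could substitute. First, a subdegree condition at the level of $G_\alpha$ rather than $G_{\alpha\beta}$: for every nontrivial subdegree $d$ of $G$ on $\mathcal{P}$ one has $(v-1)(v-2)\mid k(k-1)(k-2)\,d(d-1)$ (Lemma~\ref{lm:vkGad}(b)); combined with the explicit subdegrees of $\A_n$ on $m$-sets and on $2$-part partitions (Lemmas~\ref{lm:subdegree} and~\ref{lm:subdegree-imprimitive}) this does the work you wanted. Second, for the intransitive case the paper bounds $k$ via a fixed-point inequality (Lemma~\ref{lm:FixHvk}) applied to the $3$-cycle $g=(1,2,3)$, whose fixed $m$-sets are easy to count. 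Third, for the imprimitive and primitive-stabiliser cases the paper uses a centraliser inequality (Lemma~\ref{lm:vkGCGg}) with the same $g$, yielding $v<n^6/9$; in the primitive-stabiliser case it then invokes Jordan's theorem to show $g$ fixes no point, hence $k\mid v$ (Lemma~\ref{lm:gfixeno}), and a short parity argument plus Lemma~\ref{lm:vdivisiblebyk4} forces flag-transitivity, which is finished by Huber. Your Praeger--Saxl route for the primitive-stabiliser case is viable too, but note that $|H|<4^n$ with $k\le\sqrt{v}+2$ only gives $n$ bounded by a few hundred, not a handful; Mar\'oti's sharper bound $|H|\le n^{1+\lfloor\log_2 n\rfloor}$ (as in the paper's closing remark) brings this down to $n\le 13$.
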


The $3$-$(10,4,1)$ design appearing in Theorem~\ref{th:main} is constructed in Example~\ref{ex} in Section~3, and it is a special case of a family of flag-transitive $3$-$(v,k,1)$ designs on $\PSL_2(q)$ (noticing that $\PSL_2(9)\cong \A_6$), which is  introduced in~\cite[Theorem~(2)]{H2005} and~\cite[Theorem~3(b)]{K1985}.

Let  $\mathcal{D} $ be a nontrivial $G$-block-transitive $t$-$(v,k,1)$ design, where $G$ is  an almost simple group  with alternating socle. 
According to Kantor~\cite[Theorem~3]{K1985}, there exists no such design $\mathcal{D} $ if $t\geq 4$.
It is proved by Camina, Neumann and Praeger~\cite{CNP2003}  that if $t=2$ then $G=\A_7$ or $\A_8$ and $\mathcal{D} $ is the $1$-skeleton of the $3$-dimensional projective geometry  over $\mathbb{F}_2$, which is $G$-flag-transitive and has parameter $v=15$ and $k=3$.
Therefore, Theorem~\ref{th:main} together with~\cite[Theorem~3]{K1985} and~\cite{CNP2003} give the complete classification of such designs.

\section{Preliminaries}
For a finite group $G$,  the socle of $G$, denoted by $\Soc(G)$, is the product of all its minimal normal subgroups, and the group $G$ is said to be \emph{almost simple} if $\Soc(G)$ is a nonabelian simple group. 
For an element $g$ of $G$, let $\Cen_G(g)$ denote the centralizer of $g$ in $G$.
If $G$ is a permutation group on a finite set $\Omega$, then we use $\Fix_{\Omega}(g)$ to denote the set of points in $\Omega$ fixed by $g$.
For a point $\alpha \in \Omega$, denote by $G_\alpha$ the stabilizer of $\alpha$ in $G$, and by $\alpha^G$ the orbit of $G$ containing $\alpha$.
The alternating group and the symmetric group on a finite set  $\Omega$ is  denoted by  $\Alt(\Omega)$ and $\Sym(\Omega)$, respectively. 
For notations of groups, we follow Atlas~\cite{Atlas}. 
In particular, for two finite groups $H$ and $K$, we use $H\,{:}\, K$ to denote the semidirect product of $H$ by $K$.
For a real number $x$, let $\lfloor x \rfloor$ be the   largest integer less than or equal to $x$.

\subsection{$3$-$(v,k,1)$ designs}
In this subsection, we introduce some results about $3$-$(v,k,1)$ designs. 
The following fact is well known, and we refer the readers to~\cite[p.180]{DM-book}.
 \begin{lemma}\label{lm:brlambda2}
Let $\mathcal{D}=(\mathcal{P},\mathcal{B})$ be a $3$-$(v,k,1)$ design.  
\begin{enumerate}[\rm (a)]
\item ~$\displaystyle \vert \mathcal{B}\vert=\frac{v(v-1)(v-2)}{k(k-1)(k-2)}$. 
 
%\item The parameter $k$ is the sum of lengths of orbits of $G_\alpha$ acting on a block containing $\alpha$.

\item For every point $\alpha$ in $\mathcal{P}$, the number of blocks containing $\alpha$ is 
\[
\lambda_1:=\frac{(v-1)(v-2)}{(k-1)(k-2)}. 
\]
%Moreover, $\lambda_2$ is the sum of lengths of orbits of $G_\alpha$ acting on the set of blocks containing $\alpha$.

  \item For two distinct points  $\alpha$ and $\beta$ in $\mathcal{P}$, the number of blocks containing $\alpha$ and $\beta$ is 
 \[
\lambda_2:=\frac{ v-2 }{ k-2 }. 
\]

\end{enumerate} 
\end{lemma}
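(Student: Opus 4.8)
The plan is to establish the three formulas by a short chain of double-counting arguments, proving part (c) first, deducing (b) from (c), and finally (a) from (b). Each step counts a suitable set of incident pairs in two ways, using only the defining property that every $3$-subset of $\mathcal{P}$ lies in exactly one block.

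First I would prove (c). Fixing distinct points $\alpha,\beta\in\mathcal{P}$, I would count the pairs $(\gamma,B)$ in which $\gamma\in\mathcal{P}\setminus\{\alpha,\beta\}$ and $B\in\mathcal{B}$ satisfies $\{\alpha,\beta,\gamma\}\subseteq B$. Counting by $\gamma$: each of the $v-2$ choices yields a $3$-subset $\{\alpha,\beta,\gamma\}$ lying in exactly one block, so there are $v-2$ such pairs. Counting by $B$: writing $\lambda_2$ for the (as yet unknown) number of blocks through both $\alpha$ and $\beta$, each such block contributes its $k-2$ points other than $\alpha,\beta$, giving $\lambda_2(k-2)$ pairs. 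Equating gives $\lambda_2=(v-2)/(k-2)$; since the answer does not involve the chosen pair, $\lambda_2$ is a well-defined constant of the design.

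Next, for (b), I would fix a single point $\alpha$ and count the pairs $(\beta,B)$ with $\beta\in\mathcal{P}\setminus\{\alpha\}$ and $\{\alpha,\beta\}\subseteq B$. Counting by $\beta$ and invoking (c), each of the $v-1$ choices lies in $\lambda_2$ common blocks, giving $(v-1)\lambda_2$ pairs; counting by $B$ gives $\lambda_1(k-1)$, where $\lambda_1$ is the number of blocks through $\alpha$. Hence $\lambda_1(k-1)=(v-1)\lambda_2=(v-1)(v-2)/(k-2)$, which rearranges to the stated value of $\lambda_1$. Finally, for (a), I would count flags $(\alpha,B)$ with $\alpha\in B$: summing over points gives $v\lambda_1$ by (b), while summing over blocks gives $k\vert\mathcal{B}\vert$, so $\vert\mathcal{B}\vert=v\lambda_1/k=v(v-1)(v-2)/\bigl(k(k-1)(k-2)\bigr)$, the claimed formula.

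There is no substantive obstacle here: the argument is elementary double counting, valid for any $3$-$(v,k,1)$ design with $3\le k<v$ (so that the denominators $k-2$, $k-1$, $k$ are positive). The only points needing care are the order of deduction—(c) must precede (b), which must precede (a)—and the observation, implicit at each stage, that the relevant counts are independent of the particular point or pair fixed, which is precisely what makes $\lambda_1$ and $\lambda_2$ constants of the design. One may alternatively note that all three equalities are the specialisations to $t=3$, $\lambda=1$ of the standard identity $\lambda_i\binom{k-i}{t-i}=\lambda\binom{v-i}{t-i}$ for the number $\lambda_i$ of blocks containing a fixed $i$-subset, but the iterated double counting above is entirely self-contained.
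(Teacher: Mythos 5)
Your proof is correct: the paper offers no argument of its own for this lemma, stating it as well known and citing \cite[p.180]{DM-book}, and your chain of double counts (first $\lambda_2$, then $\lambda_1$, then $\vert\mathcal{B}\vert$) is precisely the standard derivation that citation stands for, specialising $\lambda_i\binom{k-i}{t-i}=\lambda\binom{v-i}{t-i}$ to $t=3$, $\lambda=1$. Your care about the order of deduction and the well-definedness of $\lambda_1,\lambda_2$ is exactly right, and nothing further is needed.
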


The parameters $v$ and $k$ have an important relation.

\begin{lemma}[{\cite{C1976}}]\label{lm:vk}
Let $\mathcal{D} $ be a $3$-$(v,k,1)$ design. Then 
\[
v\geq k^2-3k+4.
\]
In particular, $k\leq \lfloor \sqrt{v}\rfloor+2$.
\end{lemma}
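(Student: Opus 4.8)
The plan is to reformulate the inequality and reduce it to the elementary geometry of linear spaces through a derived design. First I would record the algebraic identity $k^{2}-3k+4=(k-1)(k-2)+2$, so that the desired bound $v\geq k^{2}-3k+4$ is equivalent to $v-2\geq(k-1)(k-2)$, that is, to $\lambda_{2}=\tfrac{v-2}{k-2}\geq k-1$, where $\lambda_{2}$ is the number of blocks through a fixed pair of points computed in Lemma~\ref{lm:brlambda2}(c). Hence it is enough to prove that any two points of $\mathcal{D}$ lie together in at least $k-1$ blocks. I would also assume the design is proper, i.e. $v>k$; the degenerate case $v=k$ is the single-block trivial design and lies outside the intended range of the lemma.

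The key step is to pass to the derived design at a point. Fixing $\alpha\in\mathcal{P}$ and setting $\mathcal{P}'=\mathcal{P}\setminus\{\alpha\}$ and $\mathcal{B}'=\{B\setminus\{\alpha\}:\alpha\in B\in\mathcal{B}\}$, I would check that $\mathcal{D}'=(\mathcal{P}',\mathcal{B}')$ is a $2$-$(v-1,k-1,1)$ design: every pair $\{\beta,\gamma\}\subseteq\mathcal{P}'$ completes with $\alpha$ to a triple lying in a unique block $B\in\mathcal{B}$, and $B\setminus\{\alpha\}$ is then the unique line of $\mathcal{D}'$ through $\{\beta,\gamma\}$. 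In this linear space every line has size $k-1$, and the number of lines through an arbitrary point equals the replication number $\tfrac{(v-1)-1}{(k-1)-1}=\lambda_{2}$. Now I would run the classical ``point off a line'' argument: choosing a line $\ell$ and (using $v>k$) a point $\gamma\in\mathcal{P}'\setminus\ell$, each of the $k-1$ points $\delta\in\ell$ determines a unique line $m_{\delta}$ through $\{\gamma,\delta\}$, and distinct $\delta$ give distinct $m_{\delta}$, since two coincident such lines would meet $\ell$ in two points and hence equal $\ell$, contradicting $\gamma\notin\ell$. Thus $\gamma$ lies on at least $k-1$ lines, forcing $\lambda_{2}\geq k-1$.

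For the final clause I would note $k^{2}-3k+4>(k-2)^{2}$, so $v>(k-2)^{2}$ yields $k-2<\sqrt{v}$, and as $k-2$ is an integer this gives $k-2\leq\lfloor\sqrt{v}\rfloor$, i.e. $k\leq\lfloor\sqrt{v}\rfloor+2$. I do not expect a serious obstacle here: the underlying fact is simply that two distinct blocks of a $3$-$(v,k,1)$ design meet in at most two points (forced by $\lambda=1$), transported to the derived linear space. The one point deserving care is that the sharp constant really does require the linear-space replication count: a more naive estimate, such as counting the $\binom{k}{2}$ blocks joining an external point to the pairs of a fixed block, only yields a bound of order $k^{2}$ with a worse constant, so the reduction to $\lambda_{2}\geq k-1$ is exactly what makes the inequality tight.
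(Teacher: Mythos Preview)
Your argument is correct. Note, however, that the paper does not supply its own proof of this lemma: it is simply quoted from Cameron~\cite{C1976} without argument. So there is no ``paper's proof'' to compare against. What you have written is in fact the standard proof of this classical inequality: pass to the derived $2$-$(v-1,k-1,1)$ design at a point, then use the elementary linear-space fact that the number of lines through a point off a given line is at least the line size, which here reads $\lambda_{2}=(v-2)/(k-2)\geq k-1$. Your handling of the trivial case $v=k$ and the deduction of $k\leq\lfloor\sqrt{v}\rfloor+2$ from $v>(k-2)^{2}$ are both fine.
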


The idea of the following lemma is original from~\cite[Proposition~2.7]{CNP2003}.
\begin{lemma}\label{lm:vkGCGg}
Let $\mathcal{D}=(\mathcal{P},\mathcal{B})$ be a $G$-block-transitive $3$-$(v,k,1)$ design.  
Suppose that $G$ has an element $g \neq 1$ such that  $\langle g \rangle$ has an orbit on  $\mathcal{P}$ with length at least $3$.  Then
\[
\frac{(v-1)(v-2)}{k(k-1)(k-2)} \leq \frac{\vert G\vert}{\vert\Cen_G(g)\vert}.
\]
\end{lemma}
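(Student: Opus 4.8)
The plan is to reformulate the claim and then exhibit $\calB$ as the surjective image of an explicit finite set of size at most $v\,|g^G|$. By Lemma~\ref{lm:brlambda2}(a) the left-hand side of the asserted inequality is exactly $|\calB|/v$, while $|G|/|\Cen_G(g)|=|g^G|$ is the size of the conjugacy class of $g$ (orbit--stabilizer for the conjugation action). Hence it suffices to prove the single inequality $|\calB|\le v\,|g^G|$.

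To this end I would count the set
\[
\mathcal{A}=\{(\alpha,h) : h\in g^G,\ \alpha\in\calP,\ \text{the points } \alpha,\alpha^{h},\alpha^{h^2} \text{ are pairwise distinct}\}.
\]
For a fixed $h\in g^G$ the admissible points $\alpha$ are precisely those lying in a $\langle h\rangle$-orbit of length at least $3$; their number is a conjugation-invariant quantity $m$ with $3\le m\le v$, the lower bound being guaranteed by the hypothesis. Thus $|\mathcal{A}|=|g^G|\,m\le v\,|g^G|$. Since $\mathcal{D}$ is a $3$-$(v,k,1)$ design, the three distinct points $\alpha,\alpha^{h},\alpha^{h^2}$ lie in a unique block, so there is a well-defined map $\Phi\colon\mathcal{A}\to\calB$ sending $(\alpha,h)$ to that block.

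The key step is to show that $\Phi$ is surjective, for then $|\calB|\le|\mathcal{A}|\le v\,|g^G|$ and the proof is complete. First I would check that $\mathcal{A}$ is invariant under the $G$-action $(\alpha,h)\mapsto(\alpha^{x},h^{x})$ for $x\in G$: conjugation by $x$ sends $\langle h\rangle$-orbits to $\langle h^{x}\rangle$-orbits of the same length and keeps $h^{x}\in g^G$, while $(\alpha^{x})^{h^{x}}=(\alpha^{h})^{x}$ and $(\alpha^{x})^{(h^{x})^{2}}=(\alpha^{h^2})^{x}$. Because each $x\in G$ is an automorphism of $\mathcal{D}$, these identities yield $\Phi(\alpha^{x},h^{x})=\Phi(\alpha,h)^{x}$, so $\Phi$ is $G$-equivariant and its image is a $G$-invariant subset of $\calB$. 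The pair $(\alpha,g)$ with $\alpha$ in an orbit of $\langle g\rangle$ of length at least $3$ lies in $\mathcal{A}$, so the image is non-empty; as $\mathcal{D}$ is $G$-block-transitive, a non-empty $G$-invariant set of blocks must be all of $\calB$, whence $\Phi$ is onto.

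The main obstacle is precisely this equivariance-and-transitivity step, which is what converts the purely local datum of a single long orbit of $g$ into global control over the number of blocks; the rest is routine. Computing $|\mathcal{A}|$ and verifying that $\Phi$ is well defined are straightforward once one observes that $\lambda=1$ forces a unique block through any three distinct points, and I would emphasise that the hypothesis of an orbit of length at least $3$ enters exactly here: it is what allows $\alpha,\alpha^{h},\alpha^{h^2}$ to be chosen distinct, so that $\mathcal{A}\neq\emptyset$ and the image of $\Phi$ is non-empty.
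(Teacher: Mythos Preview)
Your argument is correct, but it proceeds differently from the paper's. The paper fixes one triple $\alpha,\beta=\alpha^{g},\gamma=\alpha^{g^{2}}$ on a long $\langle g\rangle$-orbit, observes that $\Cen_{G_\alpha}(g)$ fixes all three points and hence lies in the stabilizer $G_B$ of the unique block $B$ through them, and then bounds
\[
|\calB|=\frac{|G|}{|G_B|}\le \frac{|G|}{|\Cen_{G_\alpha}(g)|}=\frac{|G|}{|G_\alpha|}\cdot\frac{|G_\alpha|}{|G_\alpha\cap\Cen_G(g)|}\le v\cdot\frac{|G|}{|\Cen_G(g)|}.
\]
So the paper's route is a short stabilizer-chain estimate for a single block. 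Your route instead assembles all conjugates of $g$ into the set $\mathcal{A}$ and uses $G$-equivariance of $\Phi$ together with block-transitivity to force $\Phi$ to be onto $\calB$, whence $|\calB|\le|\mathcal{A}|\le v\,|g^G|$. The paper's proof is a line or two shorter and stays inside one point-stabilizer; your counting argument is a bit more conceptual, makes the role of block-transitivity explicit as the reason every block is reached, and avoids the product formula $|X\Cen_G(g)|\le|G|$ at the cost of checking $G$-invariance of $\mathcal{A}$ and equivariance of $\Phi$. Both arrive at the same inequality $|\calB|\le v\,|g^G|$, which via Lemma~\ref{lm:brlambda2}(a) is exactly the claim.
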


\begin{proof}
Let $\Omega$ be an orbit of $\langle g \rangle$ with length at least $3$. 
Take three distinct points $\alpha,\beta,\gamma \in \Omega$ such that $\beta=\alpha^g$ and $\gamma=\alpha^{g^2}$. 
Let $X=G_\alpha$ and let $U=G_\alpha \cap G_\beta \cap G_\gamma$. Note that
\[
\alpha^{g^i\Cen_{G_\alpha}(g)}=\alpha^{\Cen_{G_\alpha}(g)g^i}=\alpha^{g^i} \text{ for all } i \in \{1,2\}. 
\]
Therefore, $\Cen_{G_\alpha}(g)$ fixes $\alpha^g=\beta$ and  $\alpha^{g^2}=\gamma$, which implies that $U \geq \Cen_{G_\alpha}(g)=X \cap \Cen_G(g)$. 
Then we have 
\[
\frac{\vert X\vert}{\vert U \vert} \leq \frac{\vert X\vert }{\vert X \cap \Cen_G(g)\vert }= \frac{\vert X\Cen_G(g)\vert }{\vert \Cen_G(g)\vert } \leq  \frac{\vert G\vert }{\vert \Cen_G(g)\vert }.\]
Let $B$ be the unique 
block containing the $3$-subset $\{ \alpha,\beta,\gamma  \}$.
Then $U \leq G_{B}$.
Hence
\[
\vert \mathcal{B}\vert =\frac{\vert G\vert }{\vert G_B\vert } \leq \frac{\vert G\vert }{\vert U\vert }=\frac{\vert G\vert }{\vert X\vert }\frac{\vert X\vert }{\vert U\vert }\leq v\frac{\vert G\vert }{\vert \Cen_{G}(g)\vert }.
\] 
This together with  Lemma~\ref{lm:brlambda2}(a) prove the lemma. 
\end{proof}

Next we collect several important properties of block-transitive $3$-$(v,k,1)$ designs from~\cite{GL2022+}.

\begin{lemma}[{\cite[Lemma~2.5]{GL2022+}}]\label{lm:gfixeno}
Let $\mathcal{D}=(\mathcal{P},\mathcal{B})$ be a $G$-block-transitive $3$-$(v,k,1)$ design. 
If there exists an element $g$ of $G$ with order $3$ and $g$ fixes no points, then $k$ divides $v$.
\end{lemma}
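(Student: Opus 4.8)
The plan is to exploit the defining constraint $\lambda=1$, which forces any two distinct blocks to meet in at most two points, together with a double-counting argument over the $\langle g\rangle$-orbits on $\mathcal{P}$. First I would record the elementary consequence of the hypothesis: since $g$ has order $3$ and fixes no point of $\mathcal{P}$, every $\langle g\rangle$-orbit on $\mathcal{P}$ has length exactly $3$. Writing $m$ for the number of such orbits, this already gives $v=3m$, so in particular $3\mid v$; the goal is to upgrade this to the divisibility $k\mid v$.

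The crucial step is to show that the unique block containing a given $\langle g\rangle$-orbit is itself $g$-invariant. Fix an orbit $O=\{\alpha,\alpha^g,\alpha^{g^2}\}$ and let $B$ be the unique block containing the $3$-subset $O$ (unique because $\lambda=1$). Applying $g$, we have $O=O^g\subseteq B^g$, so $O\subseteq B\cap B^g$. If $B\neq B^g$, then the two distinct blocks $B$ and $B^g$ would share the three points of $O$, contradicting the fact that any $3$-subset lies in exactly one block; hence $B^g=B$. I would then note that a $g$-invariant block $B$ is a disjoint union of $\langle g\rangle$-orbits, because $g$ fixes no point and so permutes the $k$ points of $B$ in cycles of length $3$. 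Consequently $3\mid k$, and $B$ contains exactly $k/3$ of the orbits.

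Finally I would double count the set of incident pairs $(O,B)$ where $O$ is a $\langle g\rangle$-orbit contained in a block $B$. Counting by orbits, each orbit lies in a unique block (which by the previous step is $g$-invariant), so the total is $m$. Counting by blocks, only $g$-invariant blocks can contain a full orbit, and each such block contains exactly $k/3$ orbits, so the total is $f\cdot(k/3)$, where $f=\vert\Fix_{\mathcal{B}}(g)\vert$ is the number of $g$-invariant blocks. Equating the two counts yields
\[
m=\frac{fk}{3},\qquad\text{so}\qquad v=3m=fk,
\]
whence $k\mid v$, as required. (As a byproduct one reads off $f=v/k$.)

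I expect the only genuinely delicate point to be the $g$-invariance of the block containing an orbit, and the precise reason it works: it is exactly the $\lambda=1$ hypothesis, through the two-point bound on block intersections, that rules out $B\neq B^g$. The block-transitivity of $G$ is not actually needed for this lemma; the argument rests solely on the Steiner property and the orbit structure of $g$.
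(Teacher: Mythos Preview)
The paper does not supply its own proof of this lemma; it is simply quoted from \cite{GL2022+} without argument, so there is no in-paper proof to compare against. That said, your argument is correct and complete. The key step---that the unique block through a $\langle g\rangle$-orbit $O$ must be $g$-invariant---is exactly what $\lambda=1$ forces, and once that is established the double count of incident pairs $(O,B)$ gives $v=3m=fk$ immediately. Your closing observation is also accurate: block-transitivity of $G$ plays no role here, only the Steiner property and the fact that $g$ is a fixed-point-free automorphism of order~$3$.
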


\begin{lemma}[{\cite[Lemma~2.7]{GL2022+}}] \label{lm:vdivisiblebyk4}
Let $\mathcal{D}=(\mathcal{P},\mathcal{B})$ be a $G$-block-transitive $3$-$(v,k,1)$ design. 
If   $v$  is divisible by $k$ and $4$, then $\mathcal{D}$ is $G$-flag-transitive.
\end{lemma}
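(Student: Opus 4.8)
The plan is to reduce flag-transitivity to a transitivity statement for the block stabiliser, and then to extract that statement from the divisibility hypotheses through a $2$-adic analysis of the orbit lengths. First I would record that, since $\mathcal{D}$ is $G$-block-transitive, Block's theorem makes $G$ point-transitive, so $\vert\mathcal{B}\vert=\vert G:G_B\vert$ and $v=\vert G:G_\alpha\vert$ for a block $B$ and a point $\alpha$. The standard reduction is that the $G$-orbits on the set of flags are in bijection with the $G_B$-orbits on the $k$ points of $B$: every flag is $G$-equivalent to one whose second coordinate is $B$, and two such are $G$-equivalent precisely when they are $G_B$-equivalent. Hence $\mathcal{D}$ is $G$-flag-transitive if and only if $G_B$ is transitive on $B$; equivalently, $G$ has a single orbit on flags, and the goal becomes to show that this orbit number $f$ equals $1$.

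Next I would set up the orbit bookkeeping. For the $i$-th $G$-orbit on flags, choose a representative $(\alpha,B)$ and put $S_i=G_\alpha\cap G_B$. Its image in $B$ is a $G_B$-orbit of length $k_i=\vert G_B:S_i\vert$, and its image among the $\lambda_1$ blocks through $\alpha$ is a $G_\alpha$-orbit of length $r_i=\vert G_\alpha:S_i\vert$. Using $\vert G_B\vert=\vert G\vert/\vert\mathcal{B}\vert$ and $\vert G_\alpha\vert=\vert G\vert/v$ together with Lemma~\ref{lm:brlambda2} yields, for each $i$, the balance relation $v\,r_i=\vert\mathcal{B}\vert\,k_i$, while summing over orbits gives $\sum_i k_i=k$ and $\sum_i r_i=\lambda_1$.

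The role of the hypothesis $4\mid v$ is to pin down parities. Writing $\nu_2$ for the $2$-adic valuation, since $v\equiv 0\pmod 4$ the factors $v-1$ and $v-2$ contribute exactly one factor of $2$ to $\lambda_1=(v-1)(v-2)/((k-1)(k-2))$; as $k\geq 4$ forces $\nu_2((k-1)(k-2))\geq 1$, the identity $\nu_2(\lambda_1)+\nu_2((k-1)(k-2))=1$ forces $\lambda_1$ to be \emph{odd} and $\nu_2((k-1)(k-2))=1$. Computing $\nu_2(\vert\mathcal{B}\vert)=\nu_2(v)-\nu_2(k)$ from Lemma~\ref{lm:brlambda2}(a) and feeding it into $v\,r_i=\vert\mathcal{B}\vert\,k_i$ gives $\nu_2(r_i)=\nu_2(k_i)-\nu_2(k)$; in particular $\nu_2(k_i)\geq\nu_2(k)$ for every $i$, and it is here that the hypothesis $k\mid v$ enters, guaranteeing $\nu_2(k)\leq\nu_2(v)$ so that the valuations are consistent and the even one of $k-1,k-2$ is exactly $2\pmod 4$.

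Finally I would combine $\sum_i r_i=\lambda_1$ odd with these valuation constraints to force $f=1$, and I expect this to be the main obstacle. The arithmetic above already shows that an odd number of orbits have $r_i$ odd, but parity alone does not exclude $f\geq 2$, so the decisive step must exploit the group action, namely that each $k_i$ divides $\vert G_B\vert$ and each $r_i$ divides $\vert G_\alpha\vert$ while the $k_i$ sum to $k$ with prescribed $2$-parts. I see two candidate routes for the finish: a Cauchy--Frobenius count of flags, in which $4\mid v$ supplies (via point-transitivity and Cauchy's theorem) an involution whose fixed-flag contribution is controlled because $k\mid v$ constrains its cycle structure on points; or a direct structural analysis of the permutation group induced by $G_B$ on $B$, using Lemma~\ref{lm:vk} to bound $k$ against $v$ and eliminate the few configurations the parity argument leaves open. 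Turning the numerical constraints into the clean conclusion $f=1$ is precisely where the real work lies.
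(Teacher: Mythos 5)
Your reduction and bookkeeping are exactly right, and they match the standard argument (the paper itself does not prove this lemma but cites~\cite[Lemma~2.7]{GL2022+} for it): flag-transitivity is equivalent to $G_B$ being transitive on $B$; the $G$-orbits on flags give $G_B$-orbit lengths $k_i$ on $B$ and $G_\alpha$-orbit lengths $r_i$ on the blocks through $\alpha$ with $v\,r_i=\vert\mathcal{B}\vert\,k_i$, i.e.\ $r_i=\lambda_1 k_i/k$, and $\sum_i k_i=k$; and your deduction that $4\mid v$ forces $\lambda_1$ odd is correct. But the finish is a genuine gap, as you yourself concede, and the two escape routes you propose (a Cauchy--Frobenius count with an involution, or a structural analysis of $G_B$ acting on $B$) head in the wrong direction: no further group theory is needed beyond the balance relation you already have. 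The source of the gap is that you extracted from $k\mid v$ only its $2$-adic shadow $\nu_2(k)\le\nu_2(v)$, yielding $\nu_2(k_i)\ge\nu_2(k)$, which indeed cannot exclude $f\ge 2$ (e.g.\ two orbits with equal even $k_i$ pass all your valuation and parity tests).

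The missing step is a congruence modulo $k$ itself. Since $k\mid v$, we have $(v-1)(v-2)\equiv(-1)(-2)=2 \pmod{k}$, while also $(k-1)(k-2)\equiv 2\pmod{k}$; substituting into $\lambda_1(k-1)(k-2)=(v-1)(v-2)$ gives $2\lambda_1\equiv 2\pmod{k}$, that is, $k\mid 2(\lambda_1-1)$. Hence any common divisor $d$ of $k$ and $\lambda_1$ divides $2\lambda_1-2(\lambda_1-1)=2$, and since $\lambda_1$ is odd (your $4\mid v$ step), $d=1$, so $\gcd(\lambda_1,k)=1$. Now the integrality of $r_i=\lambda_1 k_i/k$ forces $k\mid k_i$ for every $i$, and $\sum_i k_i=k$ then leaves a single orbit, i.e.\ $G_B$ is transitive on $B$ and $\mathcal{D}$ is $G$-flag-transitive. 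In short: the ``real work'' you anticipated is a one-line gcd computation feeding into your own balance relation, not a fixed-point count; had you reduced $(v-1)(v-2)$ modulo $k$ rather than only modulo powers of $2$, your argument would close.
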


Let $G$ be a transitive permutation group on a set $\Omega$ and let $\alpha \in \Omega$. 
A  $G_\alpha$-orbit on $\Omega$ is called a \emph{suborbit} of $G$ relative to $\alpha$, and the length of a $G_\alpha$-orbit on $\Omega$ is called a \emph{subdegree} of $G$. 
Clearly, $G_\alpha$ has a trivial orbit $\{\alpha\}$.
A subdegree  is said to be \emph{nontrivial} if the corresponding suborbit is not $\{\alpha\}$. 

\begin{lemma}[{\cite[Lemma~2.8]{GL2022+}}]\label{lm:vkGad}
Let $\mathcal{D}=(\mathcal{P},\mathcal{B})$ be a $G$-block-transitive $3$-$(v,k,1)$ design. 
 Then
\begin{enumerate}[\rm (a)]
\item $(v-1)(v-2) $ divides $ k(k-1)(k-2)\vert G_\alpha\vert $ for every $\alpha \in \mathcal{P}$.
\item $(v-1)(v-2) $ divides $ k(k-1)(k-2)d(d-1) $ for every nontrivial subdegree $d$ of $G$ on $\mathcal{P}$.
\end{enumerate} 
\end{lemma}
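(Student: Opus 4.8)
The plan is to prove both divisibility statements by combining the block-count formula of Lemma~\ref{lm:brlambda2}(a) with the orbit--stabilizer relations coming from block- (hence point-) transitivity, and then, for part~(b), by counting a suitable $G$-set of ordered triples of points fibrewise over the block set.

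First I would record the two orbit--stabilizer identities. Since $\mathcal{D}$ is $G$-block-transitive, Block's theorem~\cite{Block1967} gives that $G$ is transitive on $\mathcal{P}$, so $v=\vert G\vert/\vert G_\alpha\vert$ for every $\alpha\in\mathcal{P}$, while block-transitivity gives $\vert\mathcal{B}\vert=\vert G\vert/\vert G_B\vert$ for every block $B$. For part~(a) I would substitute these into the formula $\vert\mathcal{B}\vert=v(v-1)(v-2)/\big(k(k-1)(k-2)\big)$ of Lemma~\ref{lm:brlambda2}(a) and clear denominators, which yields the exact identity
\[
(v-1)(v-2)\,\vert G_B\vert=k(k-1)(k-2)\,\vert G_\alpha\vert.
\]
Part~(a) is then immediate, since the right-hand side is a multiple of $(v-1)(v-2)$.

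For part~(b), I would first reformulate the claim: using the same expression for $\vert\mathcal{B}\vert$, the assertion $(v-1)(v-2)\mid k(k-1)(k-2)\,d(d-1)$ is equivalent to $\vert\mathcal{B}\vert\mid v\,d(d-1)$. So it suffices to produce a $G$-set of cardinality exactly $v\,d(d-1)$ that maps $G$-equivariantly onto $\mathcal{B}$. Concretely, let $\mathcal{O}$ be the $G$-orbital on ordered pairs of distinct points whose valency (the $\alpha$-suborbit length) is $d$, write $\Delta^\alpha=\{\beta:(\alpha,\beta)\in\mathcal{O}\}$, and set
\[
T=\{(\alpha,\beta,\gamma): (\alpha,\beta)\in\mathcal{O},\ (\alpha,\gamma)\in\mathcal{O},\ \beta\neq\gamma\}.
\]
Because $\alpha\notin\Delta^\alpha$ and $\beta\neq\gamma$, every triple in $T$ consists of three distinct points, and $\vert T\vert=v\,d(d-1)$ as $\vert\Delta^\alpha\vert=d$ for each $\alpha$. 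Since $\lambda=1$, each such triple lies in a unique block, so $\Phi\colon T\to\mathcal{B}$, $(\alpha,\beta,\gamma)\mapsto B(\alpha,\beta,\gamma)$, is well defined and $G$-equivariant.

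The counting step then finishes the argument. When $d\geq 2$ the image of $\Phi$ is a nonempty $G$-invariant subset of $\mathcal{B}$, and $G$ is transitive on $\mathcal{B}$, so $\Phi$ is surjective; moreover block-transitivity forces all fibres of $\Phi$ to have a common size $f$, because an element of $G$ carrying one block to another carries the corresponding fibres bijectively. Hence $v\,d(d-1)=\vert T\vert=f\,\vert\mathcal{B}\vert$, giving $\vert\mathcal{B}\vert\mid v\,d(d-1)$ and therefore part~(b) after back-substitution; the case $d=1$ is trivial since then $d(d-1)=0$. The main thing to get right --- and essentially the only obstacle --- is the bookkeeping that isolates the factor $d(d-1)$: one must use the orbital $\mathcal{O}$ rather than an ad hoc choice of suborbit so that $T$ is genuinely $G$-invariant, verify that $\Phi$ is well defined (which is exactly where $\lambda=1$ enters) and surjective, and confirm the fibres are equinumerous; once these are in place the divisibility is automatic.
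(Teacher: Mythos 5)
The paper never proves this statement: it is imported verbatim as \cite[Lemma~2.8]{GL2022+}, so there is no in-paper argument to compare yours against; your proposal therefore has to stand on its own, and it does --- it is correct and complete. For (a), combining $\vert\mathcal{B}\vert=\vert G\vert/\vert G_B\vert$ and $v=\vert G\vert/\vert G_\alpha\vert$ (point-transitivity via Block's theorem \cite{Block1967}) with Lemma~\ref{lm:brlambda2}(a) does give the exact identity $(v-1)(v-2)\vert G_B\vert=k(k-1)(k-2)\vert G_\alpha\vert$, from which (a) is immediate. For (b), your reduction of the claim to $\vert\mathcal{B}\vert\mid v\,d(d-1)$ is a genuine equivalence (cancel $v$ in the defining formula for $\vert\mathcal{B}\vert$), and the fibrewise count is sound: $T$ is $G$-invariant precisely because you use the full orbital $\mathcal{O}$ rather than one suborbit; $\Phi$ is well defined because $\lambda=1$ and because a nontrivial suborbit never contains its base point, so each triple has three distinct entries; and equivariance plus transitivity of $G$ on $\mathcal{B}$ forces all fibres to have a common size $f$. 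In fact you do not even need the surjectivity discussion --- $\vert T\vert=f\vert\mathcal{B}\vert$ holds with $f$ the common fibre size regardless, and $d=1$ is trivial as you note. Worth remarking: the more common way to run this count in the block-transitive literature is local --- fix $\alpha$, fix a suborbit $\Delta$ of length $d$, and map the $d(d-1)$ ordered pairs of distinct points of $\Delta$ to blocks through $\alpha$; there the fibre over a block $B$ is $\vert B\cap\Delta\vert(\vert B\cap\Delta\vert-1)$, which is constant only on $G_\alpha$-orbits of blocks, so one has to sum over those orbits. Your globalization over all base points, using full $G$-transitivity on $\mathcal{B}$, makes the fibres uniform in one stroke and is arguably the cleaner route.
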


\begin{lemma}[{\cite[Lemma~3.2]{GL2022+}}]\label{lm:FixHvk}
Let $\mathcal{D}=(\mathcal{P},\mathcal{B})$ be a $G$-block-transitive $3$-$(v,k,1)$ design. 
Suppose that $1 \neq H\leq G$ has an orbit on  $\mathcal{P}$ with length at least $3$. 
Then
\[
\vert \Fix_{\mathcal{P}}(H)\vert  \leq  \frac{2(v-k)}{k-2}+k-2.
\]
\end{lemma}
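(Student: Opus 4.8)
The plan is to bound $f:=\vert\Fix_{\mathcal P}(H)\vert$ by double counting pairs of fixed points against the blocks through a single moved point. If $f\le 1$ the inequality is immediate, since its right-hand side is at least $k-2\ge 1$ while the summand $\frac{2(v-k)}{k-2}$ is positive (here $v>k$ and $k\ge 3$); so I assume $f\ge 2$. Let $\Omega$ be the $H$-orbit of length at least $3$ supplied by the hypothesis and fix a point $\delta\in\Omega$. Then $\delta^{H}=\Omega$ consists of points moved by $H$, whereas every point of $F:=\Fix_{\mathcal P}(H)$ is fixed; in particular $\delta\notin F$.

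The first, and key, step is a structural observation about the blocks through $\delta$. Since any three points of $\mathcal P$ lie in a unique block, two distinct blocks meet in at most two points, and any block meeting $F$ in at least three points is fixed setwise by $H$. If such an $H$-invariant block contains $\delta$, then it contains the whole orbit $\delta^{H}=\Omega$; but any block containing the at-least-three points of $\Omega$ is forced to be the unique block through three chosen points of $\Omega$, so at most one block contains $\Omega$. Consequently, among all blocks through $\delta$ there is at most one $H$-invariant block $B_{\Omega}$ (namely the one containing $\Omega$, should it exist), and every other block through $\delta$ meets $F$ in at most two points. When $B_{\Omega}$ exists, the sets $\Omega$ and $B_{\Omega}\cap F$ are disjoint subsets of $B_{\Omega}$, so $c:=\vert B_{\Omega}\cap F\vert\le k-\vert\Omega\vert\le k-3$ (and I set $c:=0$ otherwise).

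The second step is the counting identity. Every pair of distinct points of $F$ lies together with $\delta$ in a unique block, and conversely a block $B$ through $\delta$ accounts for exactly $\binom{\vert B\cap F\vert}{2}$ such pairs; hence
\[
\binom{f}{2}=\sum_{B\ni\delta}\binom{\vert B\cap F\vert}{2}.
\]
Counting incidences of fixed points with blocks through $\delta$ and using $\lambda_2=\frac{v-2}{k-2}$ from Lemma~\ref{lm:brlambda2}(c) gives $\sum_{B\ni\delta}\vert B\cap F\vert=f\lambda_2$. I then split off the exceptional block $B_{\Omega}$ and use $\binom{t}{2}\le t/2$ for every other block through $\delta$ (valid exactly because there $t=\vert B\cap F\vert\le 2$), obtaining
\[
\binom{f}{2}\le\binom{c}{2}+\tfrac12\bigl(f\lambda_2-c\bigr),\qquad\text{i.e.}\qquad f^{2}-f(1+\lambda_2)\le c(c-2).
\]
Inserting $c(c-2)\le(k-3)(k-5)$ (from $c\le k-3$) and solving the resulting quadratic—a routine computation—yields $f\le 2\lambda_2+k-4$, which is precisely $\frac{2(v-k)}{k-2}+k-2$.

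The crux of the argument is the structural step of the second paragraph: the Steiner property (three points in a unique block, so distinct blocks meet in at most two points) combined with the hypothesis that $H$ has an orbit of length at least $3$ forces all but one block through $\delta$ to carry at most two fixed points. This is exactly what converts the a priori unrestricted quantity $\binom{f}{2}$ into a linear expression in $\lambda_2$. The remaining work—verifying that the quadratic inequality delivers the stated linear bound, and checking the degenerate cases $f\le 1$ and $B_{\Omega}$ nonexistent—is bookkeeping rather than a genuine obstacle.
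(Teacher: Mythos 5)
Your proof is correct, and there is in fact no internal argument to compare it against: the paper states this lemma as a quotation from \cite[Lemma~3.2]{GL2022+} and gives no proof, so your double count stands as a valid self-contained derivation. All the key steps check out. In a Steiner $3$-design a block meeting $F=\Fix_{\mathcal{P}}(H)$ in three points is $H$-invariant (it is the unique block through three $H$-fixed points), an $H$-invariant block through $\delta$ must contain the whole orbit $\Omega=\delta^{H}$, and at most one block contains the set $\Omega$ of size $\geq 3$; also $\Omega\cap F=\emptyset$, giving $c\leq k-3$. The identities $\binom{f}{2}=\sum_{B\ni\delta}\binom{\vert B\cap F\vert}{2}$ (a pair of fixed points together with $\delta\notin F$ lies in a unique block, and no pair is counted twice since two blocks through $\delta$ sharing the pair would share three points) and $\sum_{B\ni\delta}\vert B\cap F\vert=f\lambda_2$ are both correct, and $\binom{t}{2}\leq t/2$ for $t\leq 2$ gives $f^{2}-(1+\lambda_2)f\leq c(c-2)$ exactly as you state.

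The only blemish is the assertion $c(c-2)\leq(k-3)(k-5)$: for $k=4$ the admissible values are $c\in\{0,1\}$, so $\max c(c-2)=0>-1=(k-3)(k-5)$. The correct uniform consequence of $0\leq c\leq k-3$ is $c(c-2)\leq\max\{0,(k-3)(k-5)\}$, and this does not affect your conclusion: for $k=4$ one gets $f^{2}-(1+\lambda_2)f\leq 0$, hence $f\leq 1+\lambda_2\leq 2\lambda_2=2\lambda_2+k-4$ using $\lambda_2\geq 1$, while for $k\geq 5$ the quadratic step goes through, since any $f\geq 2\lambda_2+k-3$ forces
\[
f^{2}-(1+\lambda_2)f\;\geq\;(2\lambda_2+k-3)(\lambda_2+k-4)\;>\;(k-3)(k-5).
\]
Finally, your closing identification is right:
\[
2\lambda_2+k-4\;=\;\frac{2(v-2)+(k-2)(k-4)}{k-2}\;=\;\frac{2(v-k)}{k-2}+k-2,
\]
so the stated bound follows. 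In spirit your argument is the natural $3$-design analogue of the fixed-point counts used for linear spaces (compare \cite[Proposition~2.7]{CNP2003}, which the paper credits as the source of the idea behind Lemma~\ref{lm:vkGCGg}), which is very plausibly also how \cite{GL2022+} proves it.
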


We end this subsection with an observation which is useful in computation.
\begin{lemma}\label{lm:aninequation}
Let $v,k,c$ be positive integers such that $3<k<v$ and 
\[v\geq k^2-3k+4, \text{ and } (v-1)(v-2)\leq ck(k-1)(k-2).\]
Then $v< (c+2)^2$.
\end{lemma}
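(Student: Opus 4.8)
The plan is to manipulate the two hypotheses directly into a linear bound on $v$, obtaining in fact the slightly stronger conclusion $v \le (c+1)^2$. The decisive observation is that the first hypothesis already encodes a clean bound on the product $(k-1)(k-2)$: subtracting $2$ from both sides of $v \ge k^2 - 3k + 4$ and noting $k^2 - 3k + 2 = (k-1)(k-2)$ gives
\[
(k-1)(k-2) \le v - 2 .
\]
This is the step that makes everything collapse, since it lets me trade one of the three factors in the cubic $k(k-1)(k-2)$ for the linear quantity $v - 2$.

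First I would substitute this into the second hypothesis. As $3 < k < v$ forces both $v - 2 > 0$ and $ck > 0$, I get
\[
(v-1)(v-2) \le c\,k\,(k-1)(k-2) \le c\,k\,(v-2),
\]
and dividing through by $v-2$ yields the linear bound $v - 1 \le ck$.

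Next I would feed $v \le ck + 1$ back into the first hypothesis to eliminate $v$ and leave a quadratic condition on $k$ alone: combining with $k^2 - 3k + 4 \le v$ gives $k^2 - 3k + 4 \le ck + 1$, that is,
\[
k^2 - (c+3)k + 3 \le 0 .
\]
To pass from here to $k \le c+2$ I would argue by contradiction rather than invoke the quadratic formula: if $k \ge c+3$ then $k\bigl(k-(c+3)\bigr) \ge 0$, so the left-hand side is at least $3 > 0$, contradicting the displayed inequality. Hence $k \le c+2$, and therefore
\[
v \le ck + 1 \le c(c+2) + 1 = (c+1)^2 < (c+2)^2 .
\]

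I expect no serious obstacle: the argument is a short chain of elementary estimates, and the only points needing care are the bookkeeping ensuring each division is by a strictly positive quantity (all guaranteed by $3 < k < v$) and the clean extraction of $k \le c+2$ from the quadratic using integrality. The single genuine idea is the rewriting $(k-1)(k-2) \le v-2$, which reduces the cubic appearing in the hypothesis to a linear expression and drives the entire computation.
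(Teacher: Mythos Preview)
Your proof is correct, and in fact yields the sharper conclusion $v\le (c+1)^2$.

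Both your argument and the paper's start from the same key observation $(k-1)(k-2)\le v-2$, but they diverge from there. The paper also extracts $k\le \sqrt{v}+2$ from the first hypothesis and then runs the single chain
\[
\sqrt{v}-2<\frac{v-1}{\sqrt{v}+2}\le \frac{v-1}{k}\le \frac{(v-1)(v-2)}{k(k-1)(k-2)}\le c,
\]
giving $v<(c+2)^2$ directly. You instead deduce $v-1\le ck$, feed this back into $k^2-3k+4\le v$ to obtain the quadratic constraint $k^2-(c+3)k+3\le 0$, extract $k\le c+2$ by integrality, and combine to get $v\le c(c+2)+1=(c+1)^2$. Your route is purely integer-arithmetic (no square roots) and tightens the bound by one notch; the paper's chain is a one-liner once the two preliminary bounds are in hand. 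Either is perfectly adequate for the application in the paper, where the lemma is only used qualitatively.
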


\begin{proof}
Since $v\geq k^2-3k+4$, we have $v\geq (k-2)^2$, and $v-2\geq k^2-3k+2=(k-1)(k-2)$. 
It follows that
\[ 
\sqrt{v}-2< \frac{(\sqrt{v}-2)(v-1)}{v-4}=\frac{ v-1 }{\sqrt{v}+2} \leq \frac{v-1}{k } \leq  \frac{v-1}{k }\frac{v-2}{(k-1)(k-2)}\leq c.
\] 
Hence,  $v< (c+2)^2$.
\end{proof}

\subsection{Subdegrees of two classes of permutation groups}
In this subsection, we study the subdegrees of two classes of permutation groups associated with the alternating groups and symmetric groups. 
The first one is well-known.

\begin{lemma}\label{lm:subdegree}
Let $m$ and $n$ be positive integers with $n\geq 5$ and $n\geq 2m+1$, and let $\Omega$ be the set of $m$-subsets of $\{1,2,\dots,n\}$. 
Let $G=\Sy_{n}$ and $L=\A_{n}$ acting  naturally on $\Omega$. 
Then both $G$ and $L$ have $m+1$ suborbits on $\Omega$, and have the same  nontrivial subdegrees:  
\[d_i=\binom{m}{i}\binom{n-m}{m-i}=\frac{m!(n-m)!}{i!(m-i)!^2(n+i-2m)!} \text{ for all }i\in \{0,1,\dots,  m-1\}.\] 
\end{lemma}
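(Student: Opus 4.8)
The plan is to fix a base point and analyze the orbits of its stabilizer directly for $G=\Sy_n$, then transfer the conclusion to $L=\A_n$ by an index-$2$ splitting argument.

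First I would take $\alpha=\{1,2,\dots,m\}\in\Omega$ and compute its setwise stabilizer in $\Sy_n$ as $G_\alpha=\Sym(\alpha)\times\Sym(\bar\alpha)\cong\Sy_m\times\Sy_{n-m}$, where $\bar\alpha=\{m+1,\dots,n\}$. Since this group acts on $m$-subsets only through the partition $\{\alpha,\bar\alpha\}$, two $m$-subsets $\beta,\beta'$ lie in the same $G_\alpha$-orbit exactly when $\vert\beta\cap\alpha\vert=\vert\beta'\cap\alpha\vert$, and conversely $G_\alpha$ is transitive on the $m$-subsets meeting $\alpha$ in a prescribed number of points. The intersection number $i=\vert\beta\cap\alpha\vert$ runs over $\max(0,2m-n)\le i\le m$, and because $n\ge 2m+1$ forces $2m-n<0$, it covers all of $0\le i\le m$, giving exactly $m+1$ suborbits. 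Counting the $m$-subsets with $\vert\beta\cap\alpha\vert=i$ (choose $i$ points from $\alpha$ and $m-i$ from $\bar\alpha$) yields the subdegree $\binom{m}{i}\binom{n-m}{m-i}$; the value $i=m$ gives $\beta=\alpha$ and the trivial suborbit, leaving $i\in\{0,\dots,m-1\}$ as the nontrivial subdegrees $d_i$, with the factorial rewriting being routine.

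The main work is transferring this to $L=\A_n$. Here $L_\alpha=G_\alpha\cap\A_n$ has index $2$ in $G_\alpha$, so each $G_\alpha$-orbit either stays a single $L_\alpha$-orbit or splits into two halves, and it splits precisely when the two-point stabilizer $G_{\alpha,\beta}$ is contained in $\A_n$. I would show this never occurs. For $\beta$ with $\vert\beta\cap\alpha\vert=i$, the group $G_{\alpha,\beta}$ is the direct product of the symmetric groups on the four blocks $\beta\cap\alpha$, $\alpha\setminus\beta$, $\beta\cap\bar\alpha$, $\bar\alpha\setminus\beta$, of sizes $i$, $m-i$, $m-i$, $n-2m+i$ respectively. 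These blocks partition $\{1,\dots,n\}$, so their sizes sum to $n\ge 5$; hence some block has size at least $2$, supplying a transposition that lies in $G_{\alpha,\beta}$ but not in $\A_n$. Thus $G_{\alpha,\beta}\not\le\A_n$ for every $\beta$, no orbit splits, and $L_\alpha$ has the same orbits, hence the same subdegrees, as $G_\alpha$.

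The only delicate point is this non-splitting step, and the clean route through it is the observation that the four block sizes sum to $n$: the hypothesis $n\ge 5$ is exactly what forces a block of size at least $2$ and therefore an odd permutation in every two-point stabilizer. Everything else is bookkeeping.
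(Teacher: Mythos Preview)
Your proposal is correct and takes essentially the same route as the paper: both arguments parametrize the $G_\alpha$-orbits by the intersection size $i=\vert\beta\cap\alpha\vert$, count them directly as $\binom{m}{i}\binom{n-m}{m-i}$, and then prove non-splitting under $L_\alpha$ by observing that each two-point stabilizer $G_{\alpha\beta}=\Sym(\alpha\cap\beta)\times\Sym(\alpha\setminus\beta)\times\Sym(\beta\setminus\alpha)\times\Sym(\overline{\alpha}\setminus\beta)$ contains an odd permutation because the four block sizes sum to $n\ge 5$. The paper phrases the last step as $\vert G_\alpha\vert/\vert L_\alpha\vert=\vert G_{\alpha\beta}\vert/\vert L_{\alpha\beta}\vert=2$, which is exactly your ``$G_{\alpha\beta}\not\le\A_n$'' criterion rewritten.
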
 

\begin{proof}
Fix $\alpha \in \Omega$. For every $i\in \{0,1,\dots,m\}$, let 
\[\Delta_i=\{ \beta \in \Omega \mid  \vert \alpha \cap \beta \vert =i \}.\]
Clearly, $\Delta_m=\{ \alpha\}$, and it is the trivial suborbit of $G$ with respect to $\alpha$.
Note that 
\[ G_\a=\Sym(\alpha) \times \Sym(\overline{\alpha}) \text{ with } \overline{\alpha}=\{1,2,...,n \}\setminus \alpha. \]

Let $i\in \{0,1,\dots,m-1\}$. Take $\beta,\gamma$ from $\Delta_i$ and let $\beta_1=\beta \cap \alpha$, $\beta_2=\beta \cap \overline{\alpha}$, $\gamma_1=\gamma \cap \alpha$ and $\gamma_2=\gamma \cap \overline{\alpha}$.
Since $ \vert \beta_1 \vert = \vert \gamma_1 \vert =i$ and $ \vert \beta_2 \vert = \vert \gamma_2 \vert =m-i$,
one can find permutations $g_1 \in \Sym(\alpha)$ and $g_2 \in \Sym(\overline{\alpha})$ such that $g_1$ swaps $\beta_1$ and $\gamma_1$, and $g_2$ swaps $\beta_2$ and $\gamma_2$.
Consequently, $g_1g_2$ swaps $\beta$ and $\gamma$.
This implies that  $\beta$ and $\gamma$ are in the same orbit of $G_\alpha$.
The arbitrariness of the choices of $\beta$ and $\gamma$ implies that $\Delta_i$ is contained in a $G_\alpha$-orbit.
Furthermore, for every $g \in G_\alpha$  and for every $\beta \in \Delta_i$, since 
\[ \vert \alpha \cap \beta^g \vert = \vert \alpha^g \cap \beta^g \vert = \vert (\alpha \cap \beta)^g \vert = \vert \alpha \cap \beta \vert =i, \]
we conclude that $\Delta_i$ is exactly a $G_\alpha$-orbit.
Therefore, $G$ has $m$ nontrivial suborbits $\Delta_i$ for all $i\in \{0,1,...,m-1\}$ on $\Omega$, and the nontrivial subdegrees are 
\[d_i= \vert \Delta_i \vert =\binom{m}{i}\binom{n-m}{m-i}=\frac{m!(n-m)!}{i!(m-i)!^2(n+i-2m)!}.\] 

Now we consider the action of $L$ on $\Omega$. 
Note that for every $\beta \in \Delta_i$ with $i\in \{0,1,...,m-1\}$, writing $\overline{\beta}=\{1,2,\dots,n\} \setminus \beta$, there holds that
\[
G_{\alpha\beta}:=G_{\alpha} \cap G_{\beta}=\Sym(\alpha \cap \beta) \times \Sym(\alpha \cap \overline{\beta})  \times \Sym(\overline{\alpha}\cap \beta) \times \Sym(\overline{\alpha} \cap \overline{\beta}).
\]
Since $n\geq 5$, we conclude that both $G_\alpha$ and $G_{\alpha\beta}$ contain an odd permutation.
It follows that  $ \vert G_{\alpha} \vert / \vert L_{\alpha} \vert =2$ and $ \vert  G_{\alpha\beta} \vert / \vert  L_{\alpha\beta} \vert =2$, and hence $ \vert L_{\alpha} \vert / \vert L_{\alpha\beta} \vert = \vert G_{\alpha} \vert / \vert G_{\alpha\beta} \vert $.
This implies that both $G$ and $L$ have $m+1$ suborbits on $\Omega$, and have the same  nontrivial subdegrees.  
\end{proof}

\begin{lemma}\label{lm:subdegree-imprimitive}
Let $m\geq 3$ be a positive integer  and let $\Omega$ be  the set of partitions of $\{1,2,\dots,2m\}$ with $2$ blocks of size $m$. 
Let $G=\Sy_{2m}$ and $L=\A_{2m}$ acting naturally on $\Omega$. 
Then both $G$ and $L$ have $\lfloor m/2 \rfloor+1$ suborbits on $\Omega$, and have the same  nontrivial subdegrees: 
\[
d_i:=2^{-\lfloor \frac{2i}{m}\rfloor}\left(\frac{m!}{(m-i)!i!}\right)^2 \text{ for all }i\in \{1,2,\dots,\lfloor m/2 \rfloor\}.
\] 
\end{lemma}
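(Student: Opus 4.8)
The plan is to mimic the proof of Lemma~\ref{lm:subdegree}, the essential new feature being that the points of $\Omega$ are \emph{unordered} partitions, which forces a two-to-one folding and is precisely what produces the factor $2^{-\lfloor 2i/m\rfloor}$. First I would fix a point $\alpha=\{A,\overline{A}\}\in\Omega$, where $\overline{A}=\{1,\dots,2m\}\setminus A$ and $|A|=m$, and identify the stabiliser as the wreath product
\[
G_\alpha=(\Sym(A)\times\Sym(\overline{A})){:}\,\Sy_2,
\]
where the top $\Sy_2$ realises the swap $A\leftrightarrow\overline{A}$. For another partition $\beta=\{B,\overline{B}\}$, the four intersection sizes $|B\cap A|,\,|B\cap\overline{A}|,\,|\overline{B}\cap A|,\,|\overline{B}\cap\overline{A}|$ are all determined by the single number $a:=|B\cap A|$, being $a,\,m-a,\,m-a,\,a$. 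Since $\beta$ is unordered, relabelling $B\leftrightarrow\overline{B}$ replaces $a$ by $m-a$, so the genuine invariant attached to $\beta$ is $i:=\min\{a,m-a\}\in\{0,1,\dots,\lfloor m/2\rfloor\}$, and one checks directly that $i=0$ holds exactly when $\beta=\alpha$.

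Next I would show that each level set $\Delta_i=\{\beta\in\Omega : \min\{|B\cap A|,m-|B\cap A|\}=i\}$ is a single $G_\alpha$-orbit. That $G_\alpha$ preserves $i$ is immediate, since every element either fixes or swaps $A$ and $\overline{A}$ and in both cases preserves the multiset $\{|B\cap A|,|\overline{B}\cap A|\}=\{a,m-a\}$. For transitivity, given $\beta,\gamma\in\Delta_i$ I would relabel $\gamma=\{C,\overline{C}\}$ so that $|C\cap A|=|B\cap A|$ (using the freedom $C\leftrightarrow\overline{C}$), and then build a permutation in $\Sym(A)\times\Sym(\overline{A})\leq G_\alpha$ by choosing bijections between the corresponding four intersection pieces, which now have matching sizes; this permutation fixes $\alpha$ and carries $\beta$ to $\gamma$. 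Hence there are exactly the trivial suborbit $\Delta_0=\{\alpha\}$ together with $\lfloor m/2\rfloor$ nontrivial suborbits $\Delta_1,\dots,\Delta_{\lfloor m/2\rfloor}$.

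To compute $d_i=|\Delta_i|$ I would count $m$-subsets: the number of $m$-subsets $B$ with $|B\cap A|=a$ is $\binom{m}{a}\binom{m}{m-a}=\binom{m}{a}^2$. When $i<m/2$ (equivalently $\lfloor 2i/m\rfloor=0$) the condition $|B\cap A|=i$ singles out one block of each partition in $\Delta_i$, so $B\mapsto\{B,\overline{B}\}$ is a bijection and $d_i=\binom{m}{i}^2$. When $m$ is even and $i=m/2$ (equivalently $\lfloor 2i/m\rfloor=1$) both blocks satisfy $|B\cap A|=m/2$, so each partition is counted twice and $d_i=\frac{1}{2}\binom{m}{m/2}^2$. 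These two cases are packaged exactly by the stated $d_i=2^{-\lfloor 2i/m\rfloor}\binom{m}{i}^2$, and since $i\leq\lfloor m/2\rfloor$ no other value of $\lfloor 2i/m\rfloor$ arises.

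Finally, to pass from $G$ to $L=\A_{2m}$ I would repeat the parity argument of Lemma~\ref{lm:subdegree}: it suffices to produce an odd permutation in $G_\alpha$ and, for each $\beta\in\Delta_i$ with $i\geq 1$, an odd permutation in $G_{\alpha\beta}$, so that $|G_\alpha:L_\alpha|=|G_{\alpha\beta}:L_{\alpha\beta}|=2$ and hence every $L_\alpha$-orbit has the same length as the $G_\alpha$-orbit containing it, forcing equality. A transposition inside $A$ (possible as $m\geq 3$) lies in $G_\alpha$ and is odd; and for $\beta\in\Delta_i$ the piece $A\cap\overline{B}$ has size $m-i\geq\lceil m/2\rceil\geq 2$, so a transposition supported there fixes $A,\overline{A},B,\overline{B}$, lies in $G_{\alpha\beta}$, and is odd. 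This gives that $G$ and $L$ share the same $\lfloor m/2\rfloor+1$ suborbits and identical subdegrees. The hard part will be keeping the unordered bookkeeping honest: the entire content of the factor $2^{-\lfloor 2i/m\rfloor}$ resides in the self-paired diagonal case $a=m-a$, and one must simultaneously be sure the odd permutation used for the $\A_{2m}$ reduction is chosen in a block of size at least $2$, which is exactly what $m\geq 3$ guarantees.
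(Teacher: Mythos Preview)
Your proof is correct and essentially parallel to the paper's, but you take a more elementary route in the two central computations. For the identification of the $G_\alpha$-orbits, the paper invokes an external lemma (from~\cite{YFX2022+}) characterising when two partitions lie in the same orbit via equivalence of their $2\times 2$ intersection matrices under row and column permutations; you instead isolate the single scalar invariant $i=\min\{|B\cap A|,\,m-|B\cap A|\}$ and verify invariance and transitivity by hand. For the subdegree $d_i$, the paper works out $|G_{\alpha\beta}|$ explicitly, constructing concrete involutions $x$ (and, in the case $2i=m$, an additional $z$) to pin down the two-point stabiliser as $(\Sy_i\times\Sy_{m-i})^2$ extended by $\langle x\rangle$ or $\langle x,z\rangle$, and then takes the index; you bypass this entirely by counting $m$-subsets $B$ with $|B\cap A|=i$ and observing that the passage $B\mapsto\{B,\overline{B}\}$ is one-to-one or two-to-one according as $i<m/2$ or $i=m/2$. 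Your counting argument is cleaner for the stated purpose, while the paper's computation yields the extra structural information of $G_{\alpha\beta}$ itself. The reduction from $G$ to $L=\A_{2m}$ is the same in both: a transposition supported in a block of size $m-i\geq\lceil m/2\rceil\geq 2$ shows $|G_{\alpha\beta}:L_{\alpha\beta}|=2$.
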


\begin{proof}
Fix $\alpha=\{ V_1,V_2\}\in \Omega$.  
By~\cite[Lemma~2.9]{YFX2022+}, we conclude that  $\beta =\{ W_1,W_2\}$ and $\gamma=\{ U_1,U_2\}$ are in a same $G_\alpha$-orbit if and only if there exist permutation matrices $P$ and $Q$ such that 
\[\begin{pmatrix}
 \vert V_1 \cap U_1 \vert  &  \vert V_1 \cap U_2 \vert  \\
 \vert V_2 \cap U_1 \vert  &  \vert V_2 \cap U_2 \vert  \\
\end{pmatrix}  = P\begin{pmatrix}
 \vert V_1 \cap W_1 \vert  &  \vert V_1 \cap W_2 \vert  \\
 \vert V_2 \cap W_1 \vert  &  \vert V_2 \cap W_2 \vert  \\
\end{pmatrix} Q.  \]
Note that in  matrices $( \vert V_r \cap U_s \vert )_{2\times 2}$ and $( \vert V_r \cap W_s \vert )_{2\times 2}$, the sum of every row equals to $m$  and the sum of every column still equals to $m$. 
Therefore, the nontrivial suborbits of $G $ relative to $\alpha$ are the following sets $\Delta_i$ for all $i\in \{1,2,\dots,\lfloor m/2 \rfloor\}$:
\[ 
\Delta_i:= \left\lbrace \{ W_1,W_2\} \,\Big\vert \, \begin{pmatrix}
 \vert V_1 \cap W_1 \vert  &  \vert V_1 \cap W_2 \vert  \\
 \vert V_2 \cap W_1 \vert  &  \vert V_2 \cap W_2 \vert  \\
\end{pmatrix} =\begin{pmatrix}
i&m-i\\
m-i&i\\
\end{pmatrix} \text{ or } \begin{pmatrix} 
m-i&i\\
i&m-i\\
\end{pmatrix} \right\rbrace. 
\]
Consequently, $G$ has  $\lfloor m/2 \rfloor+1$ suborbits on $\Omega$.

Now we compute the length of $\Delta_i$ for every  $i\in \{1,2,\dots,\lfloor m/2 \rfloor\}$. 
Let $\beta=\{ W_1,W_2\} \in \Delta_i $.
Then $G_\alpha=\Sym(V_1) \times \Sym(V_2)\,{:}\,\langle g\rangle$ with $g$ an involution swapping $V_1$ and $V_2$, and $G_\beta=\Sym(W_1) \times \Sym(W_2)\,{:}\,\langle h\rangle$ with $h$ an involution swapping $W_1$ and $W_2$. 
Let $N=\Sym(V_1) \times \Sym(V_2)$ be the kernel of $G_\alpha$ acting on the partition $\alpha=\{ V_1,V_2\}$, and let $G_{\alpha\beta}=G_{\alpha} \cap G_{\beta}$.
It is easy to find one involution $x$ which swaps $V_2\cap W_1$ and $V_1\cap W_2$, and swaps $V_1 \cap W_1$ and $V_2 \cap W_2$.
This involution $x$ stabilizes the partitions $\alpha$ and $\beta$, and so $x\in G_{\alpha\beta}$.
Since $x$ swaps $V_1$ and $V_2$, we see that $x\notin N$ and hence 
\[
G_{\alpha\beta}/(N\cap G_{\alpha\beta })\cong  G_{\alpha\beta} N/N \cong \Sy_2,
\]
which implies $ \vert G_{\alpha\beta} \vert =2 \vert N\cap G_{\alpha\beta } \vert $.
Now for every $z\in N\cap G_{\alpha\beta }$, we have $V_1^z=V_1$ and  $V_2^z=V_2$, and either $(W_1,W_2)^z=(W_1,W_2)$ or $(W_1,W_2)^z=(W_2,W_1)$. 
Note that if $(W_1,W_2)^z=(W_2,W_1)$, then 
\[ i= \vert V_1\cap W_1 \vert = \vert (V_1\cap W_1)^z \vert = \vert V_1^z\cap W_1^z \vert = \vert V_1\cap W_2 \vert =m-i.\]

Suppose that $2i\neq m$. Then $i\neq m-i$, which implies that 
$(W_1,W_2)^z=(W_1,W_2)$ for all $z\in N\cap G_{\alpha\beta }$. Thus  $N\cap G_{\alpha\beta}=\Sym(V_1 \cap W_1)\times \Sym(V_1 \cap W_2) \times \Sym(V_2 \cap W_1)\times \Sym(V_2 \cap W_2)$, and so 
\[ G_{\alpha\beta}=(\Sym(V_1 \cap W_1)\times \Sym(V_1 \cap W_2) \times \Sym(V_2 \cap W_1)\times \Sym(V_2 \cap W_2))\,{:}\,\langle x\rangle. \]
Then
\[
 \vert \Delta_i \vert =\frac{ \vert G_\alpha \vert }{ \vert G_{\alpha\beta} \vert }=\frac{2(m!)^2}{2((m-i)!i!)^2}=\left(\frac{m!}{(m-i)!i!}\right)^2=2^{-\lfloor \frac{2i}{m}\rfloor}\left(\frac{m!}{(m-i)!i!}\right)^2.
\] 

Suppose that $2i=m$. Now $\vert \Omega \vert =2m=4i$ and $ \vert V_s\cap W_r \vert =i$ for all $s,r \in \{1,2\}$.
Write  $\Omega=\{t_1,t_2,\ldots,t_{4i} \}$ and
\[
\begin{array}{lll}
&V_1\cap W_1=\{t_1,t_{2}\ldots,t_i \},  &V_1\cap W_2=\{t_{i+1},t_{i+2},\ldots,t_{2i} \}, \\
&V_2\cap W_1=\{t_{2i+1},t_{2i+2},\ldots,t_{2i}\},& V_2\cap  W_2=\{t_{3i+1},t_{3i+2},\ldots,t_{4i} \}. 
\end{array} 
\]
Let $z=(t_1,t_{i+1})(t_2,t_{i+2})\cdots (t_i,t_{2i})(t_{2i+1},t_{3i+1})(t_{2i+2},t_{3i+2})\cdots (t_{3i},t_{4i})$.
Then $z$ swaps  $V_1 \cap W_1$ and $V_1 \cap W_2$, and  swaps $V_2 \cap W_1$ and $V_2 \cap W_2$.
Since $z$ fixes both $V_1$ and $V_2$, and swaps $W_1$ and $W_2$, it follows that $z \in N=\Sym(V_1) \times \Sym(V_2)$, and $z$ fixes both partitions $\alpha=\{ V_1,V_2\}$ and $\beta=\{ W_1,W_2\}$, that is, $z\in G_{\alpha\beta}$.
Therefore, $z\in N\cap G_{\alpha\beta}$.
Let $M=\Sym(W_1) \times \Sym(W_2)$. 
Since $z$ swaps $W_1$ and $W_2$, we deduce that $M\cap N\cap G_{\alpha\beta }$ has index $2$ in $N\cap G_{\alpha\beta }$.
Note that $M\cap N\cap G_{\alpha\beta }=M\cap N=\Sym(V_1 \cap W_1)\times \Sym(V_1 \cap W_2) \times \Sym(V_2 \cap W_1)\times \Sym(V_2 \cap W_2)$. 
Thus 
\[G_{\alpha\beta}=(\Sym(V_1 \cap W_1)\times \Sym(V_1 \cap W_2) \times \Sym(V_2 \cap W_1)\times \Sym(V_2 \cap W_2))\,{:}\,\langle z,x\rangle.\]
For every $s\in \{1,2\}$ and  every  $r \in \{1,2\}$, since
\[ 
\begin{split}
&(V_s \cap W_r)^{xz}=(V_{3-s} \cap W_{3-r})^{z}=V_{3-s} \cap W_{r}, \\
&(V_s \cap W_r)^{zx}=(V_{s} \cap W_{3-r})^{x}=V_{3-s} \cap W_{r},
\end{split}
\]
we conclude that $zx=xz$ and so $\langle x,z \rangle\cong \Sy_2^2$.
Then
\[
 \vert \Delta_{i} \vert = \vert \Delta_{m/2} \vert =\frac{ \vert G_\alpha \vert }{ \vert G_{\alpha\beta} \vert }=\frac{2(m!)^2}{4((m-i)!i!)^2}=2^{-\lfloor \frac{2i}{m}\rfloor}\left(\frac{m!}{(m-i)!i!}\right)^2.
\] 
 
Now we consider the action of $L$ on $\Omega$. 
Since $m\geq 3$ and $i\leq m/2$, we see that $m-i \geq 2$, which implies both $G_{\alpha}$ and $G_{\alpha\beta}$ contain an odd permutation. 
It follows that  $ \vert G_{\alpha} \vert / \vert L_{\alpha} \vert =2$ and $ \vert  G_{\alpha\beta} \vert / \vert  L_{\alpha\beta} \vert =2$, and hence $ \vert L_{\alpha} \vert / \vert L_{\alpha\beta} \vert = \vert G_{\alpha} \vert / \vert G_{\alpha\beta} \vert $.
This implies that both $G$ and $L$ have $\lfloor m/2 \rfloor+1$ suborbits on $\Omega$, and have the same  nontrivial subdegrees. 
\end{proof}

\section{Proof of Theorem~\ref{th:main}} 
In this section, we prove Theorem~\ref{th:main}. 
We make the following hypothesis throughout.

\begin{hypothesis}\label{hy:1}
Let $G$ be an almost simple group such that $\Soc(G)=\A_n$ with $n\geq 5$. 
Suppose that $\mathcal{D}=(\mathcal{P},\mathcal{B})$ is a nontrivial $G$-block-transitive $3$-$(v,k,1)$ design. 
Let  $\alpha $ be a point in $\mathcal{P}$ and let $B$ be a block in $\mathcal{B}$ such that $\alpha \in B$.  
\end{hypothesis}
  
Since there exists no $G$-block-transitive and $G$-point-imprimitive $3$-$(v,k,1)$ design by ~\cite[Corollary~2.3(a)]{MT2001}, we conclude that $G$ acts primitively on $\mathcal{P}$, and hence the group $G_\alpha$ is maximal in $G$.  
Note that for every $n\neq 6$,  the automorphism group of $ \A_n$ is $\Sy_n$, while  $\Aut(\A_6) \cong \PGaL_2(9)=\A_6.2^2$.
For convenience, we shall first deal  with $\A_5$ and $\A_6$.
For the case $n\geq 7$,  from the classification of maximal subgroups of $\A_n$ and $\Sy_n$ given by Liebeck, Praeger and Saxl~\cite{LPS1987}, we conclude that one of the following holds:
 
\begin{enumerate} [\rm (a)]
\item $G_{\alpha}=(\Sy_m \times \Sy_{n-m}) \cap G$, with $2m<n$ and  $m\geq 1$ (intransitive case);
\item $G_{\alpha}=(\Sy_m \wr \Sy_{\ell}) \cap G$,  with $n=m\ell$, and $m\geq 2 $ and $\ell\geq 2$ (imprimitive case);
\item $G_{\alpha}=\AGL_d(p) \cap G$,  with $n=p^d$  (affine case);
\item $G_{\alpha}= (T^m{:}(\Out(T)\times \Sy_\ell)) \cap G$, with $T$  a nonabelian simple group, $\ell \geq 2$ and $n= \vert T \vert ^{\ell-1}$ (diagonal case);
\item $G_{\alpha}=(\Sy_m \wr \Sy_\ell) \cap G$,  with $n=m^\ell$, $m \geq 5$ and $\ell \geq 2$ (wreath case);
\item $T \leq G_{\alpha} \leq \Aut(T)$,  with  $T \neq \A_n$ a nonabelian simple group, and $G_{v}$ acting primitively on $\{ 1,2,\dots,n\}$ (almost simple case).
\end{enumerate} 

%Note that $G_\alpha$ acts on $\{1,2,\dots,n\}$ intransitively, imprimitively or primitively for (a), (b) and (c)--(f), respectively. 
  
\subsection{The case $\Soc(G)=\A_5$ or $\A_6$}
%In this subsection, we deal with the case $\Soc(G)=\A_5$ or $\A_6$. 

\begin{lemma}
The socle of $G$ is not $\A_5$.
\end{lemma}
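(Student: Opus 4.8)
The plan is to reduce to a single pair $(v,k)$ and then eliminate it by an explicit orbit analysis on $4$-subsets, rather than by divisibility.

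First I would note that $\Aut(\A_5)=\Sy_5$, so $G\in\{\A_5,\Sy_5\}$, and that $G$ is primitive on $\mathcal{P}$ (established above), whence $v=|G:G_\alpha|$ with $G_\alpha$ maximal. Reading off the maximal subgroups of $\A_5$ and $\Sy_5$ gives $v\in\{5,6,10\}$ once the degenerate degree $v=2$ (from $\A_5<\Sy_5$) is discarded. Nontriviality forces $k\geq 4$, and Lemma~\ref{lm:vk} gives $v\geq k^2-3k+4\geq 8$; hence $v=10$, and $k^2-3k+4\leq 10$ then forces $k=4$. The unique maximal subgroup of index $10$ is a $2$-subset stabiliser, so the degree-$10$ action is the action of $G$ on the $2$-subsets of $\{1,\dots,5\}$; I identify $\mathcal{P}$ with the edge set of the complete graph $K_5$, a block being a set of four edges.

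Next, by Lemma~\ref{lm:brlambda2}(a) a $3$-$(10,4,1)$ design has $|\mathcal{B}|=30$, and block-transitivity means $\mathcal{B}$ is a single $G$-orbit of length $30$ on the $4$-edge subgraphs of $K_5$. I would enumerate these orbits. Under $\Sy_5$ they correspond to the six isomorphism types of $4$-edge graphs on five vertices, and a short stabiliser computation gives orbit lengths $5$ (the star $K_{1,4}$), $10$ (a triangle plus a disjoint edge), $15$ (a $4$-cycle plus an isolated vertex), and $60$ for each of the path $P_5$, the paw, and the remaining tree. Since none of these equals $30$, the group $\Sy_5$ cannot be block-transitive. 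For $\A_5$ a length-$60$ orbit of $\Sy_5$ splits into two orbits of length $30$ exactly when its order-$2$ stabiliser lies in $\A_5$, that is, when the stabilising involution is a double transposition. Among the three length-$60$ types only the path $P_5$ qualifies---its stabiliser is generated by the reversal $(v_1\,v_5)(v_2\,v_4)$---while the paw and the other tree are each stabilised by a transposition and do not split. Hence the only candidate block set of size $30$ consists of $30$ paths $P_5$. But a $3$-design requires every $3$-subset of $\mathcal{P}$, in particular every triangle $\{\{a,b\},\{b,c\},\{a,c\}\}$ of edges of $K_5$, to lie in exactly one block, whereas a path is acyclic and contains no triangle; so every such triple would lie in no block, a contradiction. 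This rules out $\A_5$ as well.

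I expect the genuine obstacle to be precisely this last step: the divisibility tests of Lemmas~\ref{lm:gfixeno}, \ref{lm:vkGad} and~\ref{lm:vkGCGg} are all satisfied when $(v,k)=(10,4)$, so the case cannot be closed by counting or divisibility alone. The decisive input is the explicit orbit structure on $4$-subsets, together with the observation that the sole size-$30$ orbit (the split path-orbit) violates the design axiom through the presence of triangles; identifying that this is the only candidate, and that it fails, is the heart of the argument.
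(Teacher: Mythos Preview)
Your argument is correct, and it takes a genuinely different route from the paper. The paper reaches the same reduction to $(v,k)=(10,4)$ (with $|\mathcal{B}|=30$), but then works in an explicit degree-$10$ permutation representation: for $G=\A_5$ it fixes a representative block stabiliser $G_C\cong\ZZ_2$, lists its six orbits on $\mathcal{P}$, writes down the ten possible $4$-point unions $C_1,\dots,C_{10}$, and eliminates each one by a \textsc{Magma} check (either $|C_i^G|\neq 30$, or two blocks share a $3$-subset, or some $3$-subset is uncovered); for $G=\Sy_5$ it simply reports that an analogous \textsc{Magma} computation fails. Your approach instead identifies $\mathcal{P}$ with the edge set of $K_5$, classifies the six isomorphism types of $4$-edge subgraphs with orbit lengths $5,10,15,60,60,60$ under $\Sy_5$, and then determines which length-$60$ orbits split under $\A_5$ by the parity of the stabilising involution. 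This is cleaner and entirely computer-free: it disposes of $\Sy_5$ instantly (no orbit of length $30$ exists), and for $\A_5$ it isolates the two size-$30$ orbits as the two chiral classes of Hamiltonian paths, which are then killed by the single structural observation that a path, being acyclic, cannot contain the triangle triple $\{\{a,b\},\{b,c\},\{a,c\}\}$. The paper's method is more mechanical and would transfer more easily to other small-degree situations where no such neat combinatorial model is available; your method exploits the specific identification with $K_5$ to give a conceptual obstruction.
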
 

\begin{proof}
Suppose for a contradiction that the socle of $G$ is $\A_5$. Then $G=\A_5$ or $\Sy_5$.
Note that $k>3$ as $\mathcal{D}$ is nontrivial. 
By~Lemma~\ref{lm:vk}, that is, $v\geq k^2-3k+4$, we conclude that $v\geq 8$. 
By~Atlas~\cite{Atlas}, we see that $(G,G_\alpha)=(\A_5.\mathcal{O},\Sy_{3}.\mathcal{O})$ with $\mathcal{O}\leq 2$, and  $v=10$. 
Again, from $v\geq k^2-3k+4$ we conclude that $k=4$.
By~Lemma~\ref{lm:brlambda2}(a), $ \vert \mathcal{B} \vert =30$. 
Suppose first that $G=\A_5$.  
Then $G_\alpha\cong \Sy_3$  and $G_{B}\cong \ZZ_2$. 
Note that $\A_5$ has only one non-conjugate subgroup isomorphic to $\Sy_{3}$, and has only one non-conjugate subgroup isomorphic to $ \ZZ_2$.  
Since $G$ acts transitively on $\mathcal{P}$,  we may let
\[
\begin{split}
G&=\langle   (2, 3, 5)(4, 7, 10)(6, 9, 8), (1, 2, 4)(3, 6, 7)(5, 8, 10)\rangle,\\
G_\alpha&=\langle       (2, 3, 5)(4, 7, 10)(6, 9, 8),     (2, 4)(3, 10)(5, 7)(6, 8)\rangle.
\end{split}
\]
(Computation in \magma~\cite{Magma} shows  that,  up to permutation equivalence, the above permutation group $G$ is the unique permutation representation of $\A_5$ of degree $10$.) 
Further, since $G$ acts transitively on $\mathcal{B}$, the group $H:=\langle  (2, 4)(3, 10)(5, 7)(6, 8)\rangle $ is the stabilizer of some block $C$ in $G$, that is, $ G_{C}=H$.
Then $G_{C}$ has six orbits on $\mathcal{P}$, namely,
\[
O_1:=\{1\},O_2:=\{9\},O_3:=\{2, 4\},O_4:=\{3, 10\},O_5:=\{5, 7\},O_6:=\{6, 8\}.
\]
Since $ \vert C \vert =k=4$, there are $10$ choices for $C$, namely $C_i$ for all $i \in \{1,2,\dots, 10\}$ as follows:
\begin{align*}
&C_1:=\{ 1,9,2,4\},C_2:=\{ 1,9,3, 10\},C_3:=\{ 1,9,5, 7\},C_4:=\{ 1,9,6,8\},\\
&C_5:=\{ 2,4,3, 10\}, C_6:=\{ 2,4,5, 7\},C_7:=\{ 2,4,6, 8\},C_8:=\{ 3, 10,5, 7\},\\
&C_9:=\{5, 7,6,8\},C_{10}:=\{ 3, 10,6, 8\}.
\end{align*} 
Computation in \magma~\cite{Magma} shows that 
\begin{itemize}
\item $ \vert (C_i)^G \vert <30$ for $i\in \{1,2,3,4\}$; 
\item $ \vert (C_5)^G \vert =30$, while $(C_5)^G$ contains $\{  1, 3, 6, 10\}$ and $\{  1, 6, 8, 10\}$;
\item $ \vert (C_6)^G \vert =30$, while $(C_6)^G$ contains $\{  1, 2, 6, 10\}$ and $\{  1, 6, 9, 10\}$;
\item $ \vert (C_7)^G \vert =30$, while $\{ 1,6,10\}$ is not contained in any block in $(C_7)^G$; 
\item $ \vert (C_8)^G \vert =30$, while $\{ 1,6,10\}$ is not contained in any block in $(C_8)^G$; 
\item $ \vert (C_9)^G \vert =30$, while $(C_9)^G$ contains $\{  1, 2, 6, 10\}$ and $\{  1, 6, 9, 10\}$; 
\item $ \vert (C_{10})^G \vert =30$, while $(C_{10})^G$ contains $\{  1, 3, 6, 10\}$ and $\{  1, 6, 8, 10\}$. 
\end{itemize}
Therefore, the above computation results imply  that the block $C \neq C_i$ for  any $i \in \{1,2,\dots,10\}$, a contradiction.
 
The case $G=\Sy_5$ is treated similarly, and computation in \magma~\cite{Magma} shows that the case $G=\Sy_5$ is also impossible. 
\end{proof}

 For the case $\A_6$, it turns out that there exists an example for $\mathcal{D}$. 
 %This design is a special case of a family of flag-transitive Steiner $3$-designs on $\PSL_2(q)$, which was  introduced in~\cite[Theorem~3]{K1985}(b) and~\cite[Main~Theorem]{K1985}(2). 
 Note that $\A_6 \cong \PSL_2(9)$ and $\Aut(\A_6) \cong \Sy_6:\mathrm{Z}_2$.

\begin{example}\label{ex}
Let $G  $ be a subgroup of $\Sy_{10}$ generated by permutations
\[
    (3, 6, 8, 5, 7, 10, 9, 4), 
    (1, 8, 2)(3, 4, 5)(6, 10, 7),
    (3, 7)(4, 6)(5, 10).
\]
Computation in {\sc Magma}~\cite{Magma} shows that $G\cong \Sy_6:\mathrm{Z}_2$. Let $\mathcal{D}_{3,10,4}=(\mathcal{P},\mathcal{B})$ where
\[
\begin{split}
\mathcal{P}=&\{1,2,\dots,10\},\\
\mathcal{B}=&\{      
    \{ 1, 5, 7, 6 \},
    \{ 1,   7, 8,10 \},
    \{ 3, 6, 8, 10 \},
    \{ 1, 3, 4, 10 \},
    \{ 1, 5, 9, 10 \},
    \{ 2, 6, 7, 8 \},
    \{ 1, 3, 5, 8 \}, \\&
    \{ 5, 6, 8, 9 \},
    \{ 2,  4, 7,10 \},
    \{ 4, 5, 7,8  \},
    \{ 1, 3, 6, 9 \},
    \{ 1, 4, 7, 9 \},
    \{ 3, 7, 8, 9 \},
    \{ 2, 5, 8, 10 \},\\&
    \{  1,2, 6, 10 \},
    \{ 2, 3, 4, 8 \}, 
    \{ 3, 2, 10, 9 \},
    \{ 4, 8, 9, 10 \},
    \{ 2, 3, 5, 6 \},
    \{ 3, 5, 7, 10 \},
    \{ 1, 4, 6 , 8\},\\&
    \{ 4, 5, 6, 10 \},
    \{ 2, 4, 9, 6 \},
    \{ 2, 5, 7, 9 \}, 
    \{ 1, 2, 3, 7 \},
    \{ 1, 2, 8, 9 \},
    \{ 1, 2, 4, 5 \},
    \{   3, 4,5,9 \},\\&
    \{ 6, 7, 9, 10 \},
    \{ 3, 4, 6, 7 \}
 \}.\\
\end{split}
\]  
It is a straightforward verification that $\mathcal{D}$ is a $3$-$(10,4,1)$-design.
Computation in {\sc Magma}~\cite{Magma} shows that
$G$ acts transitively on $\mathcal{B}$, and  $G_{\{1,5,6,7\}} $ is generated by
\[
(1, 5)(2, 4)(3, 8),
 (1, 6, 5)(2, 9, 4, 3, 10, 8),
 (2, 8)(3, 4)(6, 7),
\]
which implies that $G_{\{1,5,6,7\}} $ is transitive on $\{1,5,6,7\}$.
Therefore, $\mathcal{D}$ is $G$-flag-transitive.
Moreover, computation in {\sc Magma}~\cite{Magma} shows that  both $ \A_6.2_2\cong\PGL(2,9)$ and $ \A_6.2_3\cong\M_{10}$ acts flag-transitively on $\mathcal{D}$, and  $G$ is maximal in $\Sy_{10}$, which implies that $\Aut(\mathcal{D}_{3,10,4})=G=\mathrm{S}_6:\mathrm{Z}_2$. 
\end{example}

\begin{lemma}
Suppose that $\Soc(G)=\A_6$. Then $G=\PGL_2(9)$, $\M_{10}$ or $\mathrm{S}_6:\mathrm{Z}_2$, and $\mathcal{D}\cong \Aut(\mathcal{D}_{3,10,4})$.
\end{lemma}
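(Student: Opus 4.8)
The plan is to exploit the fact that there are only five groups to consider, namely $G \in \{\A_6,\ \Sy_6,\ \PGL_2(9),\ \M_{10},\ \Sy_6{:}\mathrm{Z}_2\}$ (recall $\Sy_6=\A_6.2_1$, $\PGL_2(9)=\A_6.2_2$, $\M_{10}=\A_6.2_3$ and $\Sy_6{:}\mathrm{Z}_2=\Aut(\A_6)$), and that each has only finitely many, small, primitive actions. Since $\mathcal D$ is nontrivial we have $k\geq 4$, so Lemma~\ref{lm:vk} gives $v\geq k^2-3k+4\geq 8$; thus the degree-$6$ actions are irrelevant, and inspecting the maximal subgroups in Atlas~\cite{Atlas} (or with \magma~\cite{Magma}) the faithful primitive representations of degree at least $8$ of these groups have degree $v\in\{10,15,36,45\}$. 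I would record here that the degree-$10$ action is in every case the action of $\A_6\cong\PSL_2(9)$ on the projective line $\PG(1,9)$ (equivalently, $\Sy_6$ on the ten partitions of $\{1,\dots,6\}$ into two $3$-sets), and that $\A_6$ and $\Sy_6$ possess no primitive action of degree $36$ or $45$.

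Next I would pin down $(v,k)$ by pure arithmetic, using Lemma~\ref{lm:brlambda2} together with Lemma~\ref{lm:vk}. For $v=15$, the integer $\lambda_2=(v-2)/(k-2)=13/(k-2)$ forces $k-2\mid 13$, so $k=3$ or $k=15$, neither admissible. For $v=36$, integrality of $\lambda_2=34/(k-2)$ together with $v\geq k^2-3k+4$ (which gives $k\leq 7$) leaves only $k=4$, and this is killed because $\lambda_1=\frac{35\cdot 34}{3\cdot 2}\notin\mathbb{Z}$. For $v=45$, the value $\lambda_2=43/(k-2)$ with $43$ prime again forces $k\in\{3,45\}$, impossible. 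Hence $v=10$, and then $v\geq k^2-3k+4$ with $k\geq 4$ forces $k=4$; by Lemma~\ref{lm:brlambda2}(a) we obtain $\vert\mathcal B\vert=30$. So $\mathcal D$ is a Steiner quadruple system on the ten points of $\PG(1,9)$ and $G$ is transitive on its $30$ blocks.

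It then remains to settle the degree-$10$ case, which I regard as the main obstacle. Here I would argue computationally, exactly as in the treatment of $\A_5$: for each of the five groups $G$ acting on the ten points I would list in \magma~\cite{Magma} the orbits of $G$ on the $\binom{10}{4}=210$ four-subsets, retain those orbits $\mathcal O$ of length $30$, and test whether $(\mathcal P,\mathcal O)$ is a $3$-$(10,4,1)$ design. The outcome to be verified is that $\A_6$ and $\Sy_6$ admit no such orbit (no orbit of length $30$ is a Steiner system), whereas $\PGL_2(9)$, $\M_{10}$ and $\Sy_6{:}\mathrm{Z}_2$ each do, the resulting block set being $G$-equivalent to the block set $\mathcal B$ of $\mathcal D_{3,10,4}$ of Example~\ref{ex}. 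Combined with Example~\ref{ex}, which identifies $\Aut(\mathcal D_{3,10,4})=\Sy_6{:}\mathrm{Z}_2$ and records the flag-transitivity of all three groups, this yields exactly $G\in\{\PGL_2(9),\M_{10},\Sy_6{:}\mathrm{Z}_2\}$ together with $\mathcal D\cong\mathcal D_{3,10,4}$.

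Finally, I would like to replace the bare computation for the exclusion of $\A_6$ and $\Sy_6$ by a cleaner stabilizer count, which I expect to be the subtle point. Writing $A=\Sy_6{:}\mathrm{Z}_2=\Aut(\mathcal D_{3,10,4})$, Example~\ref{ex} gives $\vert A_B\vert=48$; since $A/\A_6\cong \mathrm{Z}_2\times\mathrm{Z}_2$, a block-transitive subgroup $H$ with $\Soc(H)=\A_6$ is block-transitive if and only if $A_B\not\leq H$. One then checks that $A_B\cap\A_6$ has order $24$ and that $A_B$ lies in the field-type subgroup $\PSiL_2(9)=\Sy_6$; consequently $\Sy_6$ (which contains $A_B$) and $\A_6$ (whose orbit of $B$ then has length $15$) fail to be block-transitive, while the subgroups $\PGL_2(9)$ and $\M_{10}$ (which do not contain $A_B$) are block-transitive. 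The delicate part is precisely locating $A_B$ relative to the three index-$2$ subgroups, together with the uniqueness statement needed to guarantee that every block-transitive design arising here is $A$-isomorphic to $\mathcal D_{3,10,4}$; this is what the \magma~\cite{Magma} enumeration supplies.
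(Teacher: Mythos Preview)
Your proposal is correct and follows the same overall architecture as the paper: reduce to the five overgroups of $\A_6$, use $k\geq 4$ and Lemma~\ref{lm:vk} to force $v\geq 8$, read off the primitive degrees from Atlas, eliminate $v\in\{15,36,45\}$ by arithmetic on $\lambda_1,\lambda_2$, and then settle the $v=10$, $k=4$ case by computation. Your arithmetic is in fact slightly cleaner than the paper's for $v=36$: you kill it by non-integrality of $\lambda_1=\tfrac{35\cdot 34}{6}$, whereas the paper first isolates $k=4$ via $\lambda_2$ and then observes that $\vert\mathcal B\vert=1785$ does not divide $\vert G\vert=720$.

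The one genuine difference is in how $\A_6$ and $\Sy_6$ are excluded at $v=10$. You propose a direct \magma\ enumeration of $G$-orbits on $4$-subsets of length $30$, checking which are Steiner systems. The paper instead argues structurally: for $G=\A_6$ one has $\vert G_B\vert=12$, both conjugacy classes of order-$12$ subgroups have orbit lengths $4$ and $6$ on $\mathcal P$, so $\mathcal D$ would be $G$-flag-transitive; then Huber's classification~\cite{H2005} forces $G$ to be $3$-homogeneous on $10$ points, which $\A_6$ (and $\Sy_6$) is not. Your route is more self-contained but purely computational; the paper's route leverages the existing flag-transitive classification and explains \emph{why} these two groups fail. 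Your optional final paragraph (locating $A_B$ among the three index-$2$ subgroups of $\Aut(\A_6)$) is a nice conceptual complement, but as you note it only analyses block-transitivity on the specific design $\mathcal D_{3,10,4}$ and still needs the orbit enumeration (or uniqueness of $S(3,4,10)$) to exclude other putative designs for $\A_6$ and $\Sy_6$.
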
  
\begin{proof}  
%Note that $\Aut(\A_6) \cong \mathrm{S}_6:\mathrm{Z}_2$. 
By Atlas~\cite{Atlas}, the possibilities for $G$ are $ \A_6$, $ \A_6.2_1 \cong \Sy_6$, $ \A_6.2_2 \cong \PGL(2,9)$, $ \A_6.2_3 \cong \M_{10}$ and $\Aut(\A_6)\cong \mathrm{S}_6:\mathrm{Z}_2$. 
Since $ k>3$ and $v\geq k^2-3k+4$, we have $v\geq 8$.
 
\vspace{3pt}
\underline{$G=\A_6$.} Since $v\geq 8$,  by~Atlas~\cite{Atlas} we have $(G_\alpha,v)=(3^2\,{:}\,4,10)$ or $(\Sy_4,15)$. 

Suppose $(G_\alpha,v)=(3^2\,{:}\,4,10)$. Then $k=4$ as $v\geq k^2-3k+4$.
By Lemma~\ref{lm:brlambda2}(a) we have $ \vert \mathcal{B} \vert =30$, and so $ \vert G_{B} \vert =12$.
Computation in~\magma~\cite{Magma} shows that $G$ has two non-conjugate subgroups of order $12$, and both of them have two orbits on $\mathcal{P}$ with lengths $4$ and $6$.
This implies that $G_{B}$ is transitive on $B$ and hence $ \mathcal{D}$ is $G$-flag-transitive. 
From~\cite[p.208]{H2005}, we see that $G$ should be $3$-homogeneous on $\mathcal{P}$.
However, computation in~\magma~\cite{Magma} shows that $ \A_6$ is not $3$-homogeneous on a set of $10$ points, a contradiction.
Actually, computation in~\magma~\cite{Magma} shows that for those two subgroups of order $12$ in $\A_6$, their orbits of length $4$ admits setwise stabilizer of order $24$ in $\A_6$.

Suppose $(G_\alpha,v)=(\Sy_4,15)$. Then $k=4$ or $5$. However, by Lemma~\ref{lm:brlambda2}(c),
$v-2$ is divisible by $k-2$. Thus, both $k=4$ and $k=5$ are impossible.

\vspace{3pt}
\underline{$G=\mathrm{S}_6$.}   
Since $v\geq 8$,  by~Atlas~\cite{Atlas} we have $(G_\alpha,v)=(3^2\,{:}\,\D_8,10)$ or $(\Sy_4 \times 2,15)$.
This case is ruled out with  similar arguments as in  the case $G=\A_6$. 
Note that computation in~\magma~\cite{Magma} shows that the action of $\A_6.2_1$ on $10$ points is also not $3$-homogeneous.
  
\vspace{3pt}
\underline{$G=\PGL_2(9)$.}
Since $v\geq 8$,  by~Atlas~\cite{Atlas} we have $(G_\alpha,v)=( \D_{20},36)$, $(3^2\,{:}\,8,10)$ or $( \D_{16},45)$.

Suppose $(G_\alpha,v)=( \D_{20},36)$. 
Then $k\in \{4,5,6,7\}$ as  $ k>3$ and $v\geq k^2-3k+4$. 
Since $v-2$ is divisible by $k-2$ by Lemma~\ref{lm:brlambda2}(c), we deduce that $k=4$. 
Then $ \vert \mathcal{B} \vert =1785$ by  Lemma~\ref{lm:brlambda2}(a).
This is a contradiction because $ \vert G \vert =720$ is divisible by $ \vert \mathcal{B} \vert $.

Suppose $(G_\alpha,v)=(3^2\,{:}\,8,10)$. 
From  $ k>3$ and $v\geq k^2-3k+4$ we obtain $k=4$.
Then  $\vert B\vert =30$ and $\vert G_B \vert=24$.
Computation in~\magma~\cite{Magma} shows that $G$ has only one  conjugate classes of subgroup  of order $24$, and moreover, $G_B$  has two orbits of lengths $4$ and $6$ on $\mathcal{P}$.
This implies that $\mathcal{D}$ is flag-transitive, and a block is the $G_B$-orbit of length $4$.
We may identify $G=\PGL_2(9)$ with a subgroup of the group $\mathrm{S}_6:\mathrm{Z}_2$ constructed in Example~\ref{ex}, that is,
\[ G=\langle (1, 4)(2, 10)(3, 5)(6, 7)(8, 9),
    (2, 6, 10)(3, 8, 5)(4, 9, 7) \rangle.\]
Then we may take $G_B$ as the subgroup generated by permutations 
\[
  (1, 6, 7)(2, 8, 10)(3, 4, 9), 
    (1, 5, 7)(2, 9, 4)(3, 10, 8), 
    (2, 10)(3, 9)(4, 8)(5, 7).
\]
Now the $G_B$-orbit of length $4$  is $\{1,5,6,7 \}$.
Computation shows that $\mathcal{D}$ is exactly the design $\mathcal{D}_{3,10,4}$ in Example~\ref{ex}. 
 
Suppose $(G_\alpha,v)=(\D_{16},45)$. 
Then from  $k>3 $ and $v\geq k^2-3k+4$ we conclude that $k\in \{4,5,6,7,8\}$. 
However,  $v-2$ is not divisible by $k-2$  for any $k\in \{4,5,6,7,8\}$, contradicting  Lemma~\ref{lm:brlambda2}(c).

\vspace{3pt}
\underline{$G=\M_{10}$.} Since $v\geq 8$,  by Atlas~\cite{Atlas} we have $(G_\alpha,v)=( 5\,{:}\,4,36)$, $(3^2\,{:}\,8,10)$ or $( 8\,{:}\,2,45)$.
The arguments for this case are  similar to that  for the case $G=\PGL_2(9)$. 
%Note that computation in~\magma~\cite{Magma} shows that the action of $\A_6.2_3$ on $10$ points is not $3$-homogeneous.

\vspace{3pt}
\underline{$G=\mathrm{S}_6:\mathrm{Z}_2$.} From Atlas~\cite{Atlas} we see that $(G_\alpha,v)=( 10\,{:}\,4,36)$, $(3^2\,{:}\,[2^4],10)$ or $( [2^5],45)$.
The arguments are also  similar.  
\end{proof}

\subsection{The intransitive case}

In this subsection, we assume that $n\geq 7$ and deal  with the case (a), where $G_\alpha$ acts intransitively on $\{1,2,\dots,n\}$. 
In this case, we may identify $\mathcal{P}$ with  the set of $m$-subsets of $\{1,2,\dots,n\}$. 
Recall that the nontrivial subdegrees of $G$ are given in Lemma~\ref{lm:subdegree}.

\begin{lemma}
Suppose that $\Soc(G)=\A_n$ with $n\geq 7$. Then $G_\a$ is not  of intransitive case.
\end{lemma}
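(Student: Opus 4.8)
The plan is to rule out the intransitive case by combining the arithmetic constraints on $(v,k)$ with the explicit subdegree formula of Lemma~\ref{lm:subdegree}. In this case $G_\alpha = (\Sy_m \times \Sy_{n-m}) \cap G$ with $2m < n$, so $\mathcal{P}$ is identified with the set of $m$-subsets of $\{1,\dots,n\}$ and $v = \binom{n}{m}$. The nontrivial subdegrees are the $d_i = \binom{m}{i}\binom{n-m}{m-i}$ for $i \in \{0,\dots,m-1\}$. The key divisibility constraint is Lemma~\ref{lm:vkGad}(b): for every nontrivial subdegree $d$, the product $(v-1)(v-2)$ must divide $k(k-1)(k-2)\,d(d-1)$. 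Alongside this, Lemma~\ref{lm:vk} gives $v \geq k^2 - 3k + 4$, equivalently $k \leq \lfloor\sqrt v\rfloor + 2$.

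**The main line of attack.**

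First I would dispose of the smallest case $m = 1$, where $\mathcal{P} = \{1,\dots,n\}$ and $G$ acts $(n-2)$-transitively; here $v = n$, and the $3$-homogeneity/transitivity of $G$ forces a highly restrictive situation that one checks directly (indeed $\A_n$ is $3$-transitive for $n\geq 5$, making $\mathcal{D}$ trivial unless parameters collapse). For $m \geq 2$ the strategy is to extract the largest useful subdegree. The smallest nontrivial subdegree $d_{m-1} = \binom{m}{m-1}\binom{n-m}{1} = m(n-m)$ is the cleanest to work with, since $d_{m-1}$ is small relative to $v = \binom{n}{m}$. Applying Lemma~\ref{lm:vkGad}(b) with $d = d_{m-1}$ gives
\[
(v-1)(v-2) \mid k(k-1)(k-2)\,m(n-m)\bigl(m(n-m)-1\bigr).
\]
Since Lemma~\ref{lm:vk} bounds $k(k-1)(k-2) < v^{3/2}$ roughly, while $v = \binom{n}{m}$ grows far faster than any polynomial in $m(n-m)$ once $m \geq 2$ and $n$ is moderately large, the left-hand side $(v-1)(v-2) \sim v^2$ eventually exceeds the right-hand side. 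I would make this precise by bounding $d_{m-1}(d_{m-1}-1) \leq m^2(n-m)^2$ and $k(k-1)(k-2) \leq (\lfloor\sqrt v\rfloor+2)^3$, then showing $v^2$ outgrows $(\sqrt v + 2)^3 \cdot m^2(n-m)^2$ for all but finitely many $(n,m)$. Equivalently, feeding the bound into Lemma~\ref{lm:aninequation} with $c = m(n-m)(m(n-m)-1)$ yields $v < (c+2)^2$, i.e. $\binom{n}{m} < \bigl(m(n-m)(m(n-m)-1)+2\bigr)^2$, an inequality in $n,m$ that fails for all $n \geq 7$ with $2 \leq m < n/2$.

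**The residual finite check and the main obstacle.**

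The inequality $\binom{n}{m} < (m(n-m)(m(n-m)-1)+2)^2$ will hold only for a short finite list of small pairs $(n,m)$ — these are exactly the cases the asymptotic argument cannot kill, and they constitute the main obstacle. For each surviving pair I would revert to the full divisibility package: use Lemma~\ref{lm:vk} to pin down the finitely many admissible $k$, then test each against Lemma~\ref{lm:brlambda2}(c) ($(k-2)\mid(v-2)$), Lemma~\ref{lm:vkGad}(a) ($(v-1)(v-2)\mid k(k-1)(k-2)|G_\alpha|$), and the remaining subdegree conditions of Lemma~\ref{lm:vkGad}(b) across \emph{all} indices $i$, not merely $i=m-1$. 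The larger subdegrees $d_i$ for $i$ near $m/2$ carry strong divisibility information and should eliminate the leftover pairs; where a purely arithmetic argument is delicate I would invoke Lemma~\ref{lm:gfixeno} (a fixed-point-free element of order $3$ forcing $k\mid v$, readily arranged inside $\A_n$ for $n\geq 7$) together with Lemma~\ref{lm:vdivisiblebyk4} to force flag-transitivity and then contradict Kantor's classification, since no flag-transitive $3$-design arises from an intransitive alternating action of this degree. I anticipate the bookkeeping over the finite exceptional list — rather than any single conceptual step — to be where the real work lies.
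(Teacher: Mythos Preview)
Your overall strategy---use the small subdegree $d_{m-1}=m(n-m)$ together with the Cameron bound on $k$ and Lemma~\ref{lm:aninequation} to force $v<(c+2)^2$ with $c=d_{m-1}(d_{m-1}-1)$, then mop up a finite residual list---is close to the paper's, but it contains a genuine gap: the asymptotic step does \emph{not} produce a finite list when $m$ is small. For fixed $m$ one has $v=\binom{n}{m}\asymp n^{m}$ while $c=d_{m-1}(d_{m-1}-1)\asymp m^{2}n^{2}$, so the conclusion $v<(c+2)^{2}$ reads roughly $n^{m}\lesssim n^{4}$. This is vacuous for $m\in\{2,3,4\}$: for example with $m=2$ you get $n(n-1)/2<\bigl(2(n-2)(2n-5)+2\bigr)^{2}$, which holds for every $n$, so infinitely many pairs $(n,m)$ survive. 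Your claim that the inequality ``fails for all $n\ge 7$ with $2\le m<n/2$'' is therefore incorrect, and the ``short finite list'' you plan to check is in fact infinite.

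The paper repairs exactly this weakness by bringing in one more ingredient you omit: the fixed-point bound of Lemma~\ref{lm:FixHvk} applied to $g=(1,2,3)$. Counting $m$-subsets fixed by $g$ gives $|\Fix_{\mathcal P}(\langle g\rangle)|\ge\binom{n-3}{m}$, and Lemma~\ref{lm:FixHvk} then yields
\[
k\;<\;\frac{3v}{|\Fix_{\mathcal P}(\langle g\rangle)|}+2\;\le\;\frac{3n(n-1)(n-2)}{(n-m)(n-m-1)(n-m-2)}+2,
\]
which for fixed $m$ is \emph{bounded as $n\to\infty$} (indeed tends to $5$). Feeding this constant bound on $k$, rather than the crude $k\le\sqrt v+2$, into $(v-2)^{2}<k(k-1)(k-2)\,m^{2}(n-m)^{2}$ now forces $v\lesssim n^{2}$, contradicting $v\asymp n^{m}$ for each $m\ge 2$ once $n$ is large. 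This is what reduces the problem to a genuinely finite list of small $(n,m)$, which the paper then disposes of with the full subdegree divisibility package (and a short computer check at $(n,m,k)=(8,3,11)$). Your fallback via Lemma~\ref{lm:gfixeno} is also unreliable here: in the intransitive action a $3$-cycle such as $(1,2,3)$ fixes many $m$-subsets, and producing a fixed-point-free element of order $3$ is not automatic.
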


\begin{proof}
Suppose that $G_{\alpha}=(\Sy_m \times \Sy_{n-m}) \cap G$, with  $m\geq 1$ and $n\geq 2m+1$. 
If $m=1$, then $v=n$, $G_\alpha=\A_{n-1}$ or $\Sy_{n-1}$, and   $G$ acts $2$-transitively on $\mathcal{P}$, and the case is proved to be impossible by~\cite{K1985}.
Therefore, $m\geq 2$. 

Since $m\geq 2$, the subgroup $\Sy_m$ of $\Sy_m \times \Sy_{n-m}$ contains an odd permutation on $\{1,2,\dots,n\}$,  which implies that $(\Sy_m \times \Sy_{n-m}) \cap \A_n$ is of index $2$ in $\A_n$. 
Hence
\begin{equation}\label{eq:vintrans}
v=\frac{ \vert \Sy_n \vert }{ \vert \Sy_{m} \vert \cdot  \vert \Sy_{n-m} \vert }=\frac{n!}{m!(n-m)!}=\frac{n(n-1)\cdots (n-m+1)}{m!}=\binom{n}{m}.
\end{equation}

Let $g=(1,2,3) \in G$. 
Since $G$ acts faithfully on $\mathcal{P}$, it follows that there exists at least one point not fixed by $g$, and hence
$\langle g\rangle$ has an orbit of length $3$ on $\mathcal{P}$.
Note that now we identify $\mathcal{P}$ with the set of $m$-subsets of $\{1,2,\dots,n \}$. 
Let $\beta \in \mathcal{P}$ be a $m$-subset fixed by $g$.
If $m=2$, then  $\{1,2,3\} \subseteq \{1,2,\dots,n \} \setminus \beta$ and hence there are $\binom{n-3}{m}$ choices for $\beta$.
If $m\geq 3$, then either $\{1,2,3\} \subseteq \{1,2,\dots,n \} \setminus \beta$ or $\{1,2,3\} \subseteq  \beta $, and hence there are $\binom{n-3}{m-3}+\binom{n-3}{m}$ choices for $\beta$. 
It follows that
\[
 \vert \Fix_{\mathcal{P}}(\langle g\rangle) \vert =\binom{n-3}{m} \text{ if }m=2, \text{ and }  \vert \Fix_{\mathcal{P}}(\langle g\rangle) \vert =\binom{n-3}{m-3}+\binom{n-3}{m} \text{ if }m\geq 3.
\] 
In particular,
\begin{equation}\label{eq:fixg}
 \vert \Fix_{\mathcal{P}}(\langle g\rangle) \vert \geq\binom{n-3}{m}.
\end{equation}
Since $v\geq k^2-3k+4$ by Lemma~\ref{lm:vk}, we have $v-k\geq k^2-4k+4=(k-2)^2$. 
Then from Lemma~\ref{lm:FixHvk} we conclude that
\[ 
 \vert \Fix_{\mathcal{P}}(\langle g\rangle) \vert  \leq \frac{2(v-k)}{k-2}+k-2 =\frac{2(v-k)+(k-2)^2}{k-2}\leq \frac{3(v-k)}{k-2}< \frac{3v}{k-2}.
\]
This  implies that
\begin{equation}\label{eq:intransk}
k< \frac{3v}{ \vert \Fix_{\mathcal{P}}(\langle g\rangle) \vert }+2 \leq 
3\binom{n }{m}/\binom{n-3}{m}+2=\frac{3n(n-1)(n-2)}{ (n-m)(n-m-1)(n-m-2)}+2.
\end{equation} 
By Lemma~\ref{lm:subdegree}, $G$ has a nontrivial subdegree $d:= m(n-m)$. 
Then from Lemma~\ref{lm:vkGad}(b), we conclude that
\begin{equation}\label{eq:intransv2kd}
(v-2)^2<(v-1)(v-2)\leq k(k-1)(k-2)d(d-1)=k(k-1)(k-2)m^2(n-m)^2.
\end{equation}
  
Suppose that the pair $(m,n)$ satisfying one of the following: 
 \begin{itemize}
\item $m=2$ and $7\leq n \leq 31$;
\item $m=3$ and $7\leq n \leq 16$;
\item $m=4$ and $9\leq n \leq 15$;
\item $(5,11)$, $(5,12)$,  $(5,13)$, $(5,14)$, $(6,13)$,  $(6,14)$, $(7,15)$.
\end{itemize}
Recall that the nontrivial subdegrees of $G$ are given in Lemma~\ref{lm:subdegree}. 
For every pair  $(n,m)$ above and every $k$ satisfying~\eqref{eq:intransk} and $k>3$,  computation shows that Lemma~\ref{lm:vkGad} holds only if $n=8$, $m=3$ and $k=11$.
In this case, we have $v=56$ and $ \vert \mathcal{B} \vert =168$. 
Suppose that $G=\Sy_8$. 
We may let $G_\a$ be the stabilizer of subset $\{1,2,3\}$ in $G$. 
Computation in~\magma~\cite{Magma} shows that there are two non-conjugate subgroups of index $168$ in $G$, say $H_1$ and $H_2$, and the orbits of $H_1$ on $\mathcal{P} $ are $O_i$ for $1\leq i \leq 6$ with $ \vert O_1 \vert =1$, $ \vert O_2 \vert =5$, $ \vert O_3 \vert = \vert O_4 \vert = \vert O_5 \vert =10$ and $ \vert O_6 \vert =20$, and the orbits of $H_2$ on $\mathcal{P} $ are $Q_i$ for $1\leq j \leq 3$ with $ \vert Q_1 \vert =6$, $ \vert Q_2 \vert =20$ and $ \vert Q_3 \vert = 30$.
Since $k=11$, that is, a block consists of $11$ points, we derive that $G_{B}$ is conjugate to $H_{1}$, and 
$\mathcal{B}=(O_1\cup O_3)^{G}$, $(O_1\cup O_4)^{G}$ or $(O_1\cup O_5)^{G}$.
However, for every candidate for $\mathcal{B}$,  computation in~\magma~\cite{Magma} shows that 
there exists some $3$-subset of $\{1,2,\dots,v\}$ which is contained in at least two blocks, a contradiction.
The computation for the case $G=\A_8$ is similar, and it turns out that  the case $G=\A_8$ is still impossible.  
%
%Therefore, it remain to consider the following cases:
%\begin{itemize}
%\item $m=2$ and $ n \geq 129$;
%\item $m=3$ and $ n \geq 22$;
%\item $m=4$ and $ n \geq 14$;
%\item $m=5$ and $ n \geq 12$;
%\item $m\geq 6$.
%\end{itemize}

%Next, we divide the remaining candidates for pair $(n,m)$ into several cases.

 Suppose $m=2$ and  $n\geq 32$.  By~\eqref{eq:intransk} we see that
\[
k< \frac{3n(n-1)(n-2) }{ (n-2)(n-3)(n-4)}+2   < 3\left( \frac{n-1}{n-4}\right)^2+2 \leq  3\left( \frac{32-1}{32-4}\right)^2+2<6.
\]  
Thus $k\leq 5$. Then~\ref{eq:intransv2kd} is reduced to
\[
 \left( \frac{n(n-1)}{2} -2 \right)^2 < 5\cdot 4\cdot 3\cdot 2^2\cdot(n-2)^2. 
\]
Computation shows that the above inequality does not hold for any $n\geq 32$, a contradiction.

%With similar arguments, one can rule the following cases:
%\begin{itemize} 
%\item $m=3$ and $ n \geq 17$;
%\item $m=4$ and $ n \geq 16$;
%\item $m=5$ and $ n \geq 15$;
%\item $m =6$ and $ n \geq 15$;
%\item $m =7$ and $ n \geq 16$.
%\end{itemize}
%To avoid many repeated arguments, we omit the proof.  
 
  Suppose $m=3$ and $n\geq 17$.  
Then by~\eqref{eq:intransk} we have
\[
k< \frac{3n(n-1)(n-2) }{ (n-3)(n-4)(n-5)}+2 < 3\left( \frac{n-2}{n-5}\right)^3+2\leq  3\left( \frac{17-2}{17-5}\right)^3+2<8.
\]  
This implies $k\leq 7$.
Then~\eqref{eq:intransv2kd} is reduced to
\[
\left( \frac{n(n-1)(n-2)}{6} -2\right)^2 < 7\cdot 6\cdot 5 \cdot 3^2\cdot(n-3)^2. 
\]
 Computation shows that there is no $n\geq 17$ satisfying the above inequality.

To avoid many repeating arguments, we omit the proof for the following cases:
\begin{itemize}
\item $m=4$ and $n\geq 16$;
\item $m=5$ and $n\geq 15$;
\item $m=6$ and $n\geq 15$;
\item $m=7$ and $n\geq 16$.
\end{itemize} 

Finally, we suppose that  $m\geq 8$.  
Since $n\geq 2m+1$, we have $n\geq 17$, and $m\leq n-m-1$ and $m\leq (n-1)/2$.
Then by Lemma~\ref{lm:vkGad}(b), we conclude that
\[
\begin{split}
 (v-2)^2&< (v-1)(v-2)\leq k(k-1)(k-2)d(d-1)<(k-1)^3d^2\\
 &<\left(\frac{3n(n-1)(n-2)}{ (n-m)(n-m-1)(n-m-2)}+1\right)^3m^2(n-m)^2\\
 &< \left(\frac{4n(n-1)(n-2)}{ (n-m)(n-m-1)(n-m-2)}\right)^3(n-m-1)^2(n-m)^2\\
 &=\frac{2^6n^3(n-1)^3(n-2)^3}{(n-m)(n-m-1)(n-m-2)^3}\leq \frac{ 2^{6}n^3(n-1)^3(n-2)^3}{\frac{n+1}{2}\frac{n-1}{2}\left(\frac{n-3}{2}\right)^3}\\
 & =  \frac{2^{11}n^3(n-1)^2}{n-3}  \cdot \frac{(n-2)^3}{(n+1)(n-3)^2}.
\end{split}
\]
Since $n\geq 17$, one has 
\[
\begin{split}
&(n-2)^3-(n+1)(n-3)^2=-n^2 + 9n -17<0, \\
&(48n^2-2)^2(n-3)-2^{11}n^3(n-1)^2=256  n^5 - 2816  n^4 - 2240   n^3 + 576  n^2 + 4 n - 12>0.
\end{split}
\]
Then
\[
(v-2)^2<\frac{2^{11}n^3(n-1)^2}{n-3} \cdot \frac{(n-2)^3 }{(n+1)(n-3)^2}< \frac{2^{11}n^3(n-1)^2}{n-3}<(48n^2-2)^2,
\]
and hence $v<48n^2$, that is,
\begin{equation}\label{eq:intrans}
 \frac{n(n-1)\cdots (n-m+1)}{m!} <48n^2.
\end{equation} 
We shall show that~\eqref{eq:intrans} does not hold for any $m\geq 8$ and $n\geq 2m+1$. 
To do this, we let  
\[ 
f(n,m) = \frac{n(n-1)\cdots (n-m+1)}{m!\cdot n^2}=\frac{(n-1)\cdots (n-m+1)}{m!n}.  
\]  
Since $n\geq 2m+1$, we have $m\leq (n-1)/2 $ and so
\[
n^2-(n+1)(n-m+1)\geq n^2-(n+1)\left(n-\frac{n-1}{2}+1 \right) =\frac{n^2-4n-3}{2}>0.
\]
Therefore,
\[ 
\frac{f(n+1,m)}{f(n,m)}=\frac{n}{n-m+1}\cdot\frac{n}{n+1}=\frac{n^2}{(n+1)(n-m+1)}>1, 
\]
which implies $f(n,m)\geq f(2m+1,m)$ for all $n\geq 2m+1$.
Let 
\[
h(m)=f(2m+1,m)=\frac{2m(2m-1)\cdots(m+2)}{m!(2m+1)}.
\]
Then
\[
\frac{h(m+1)}{h(m)}=\frac{(2m+2)(2m+1)}{m+2}\cdot\frac{1}{m+1}\cdot\frac{2m+1}{2m+3}= \frac{2(2m+1)^2}{(m+2)(2m+3)}>1.
\]
Consequently, $h(m)\geq h(8)  $ for all $m\geq 8$, and hence $f(n,m)\geq h(m)\geq h(8)$ for all $n\geq 2m+1$ and all $m\geq 8$.
Computation shows that $h(8)>84$.
Therefore, $f(n,m)>48$ for all $n\geq 2m+1$ and all $m\geq 8$,  contradicting~\eqref{eq:intrans}. 
\end{proof}

\subsection{The imprimitive case} 
In this subsection, we assume that $n\geq 7$ and deal  with the case (b), where $G_\alpha$ acts imprimitively on $\{1,2,\dots,n\}$. 
In this case, we may identify $\mathcal{P}$ with  the set of partitions of $\{1,2,\dots, m\ell\}$ with $\ell\geq 2$ blocks of size $m\geq 2$.  
Recall that in the case  $\ell=2$, the nontrivial subdegrees of $G$  are given in Lemma~\ref{lm:subdegree}.

\begin{lemma}
Suppose that $\Soc(G)=\A_n$ with $n\geq 7$. Then $G_\a$ is not of imprimitive case.
\end{lemma}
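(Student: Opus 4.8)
The claim is that in the imprimitive case, where $G_\alpha = (\Sy_m \wr \Sy_\ell) \cap G$ with $n = m\ell$, $m \geq 2$, $\ell \geq 2$, no nontrivial block-transitive $3$-$(v,k,1)$ design can exist. Here $\mathcal{P}$ is identified with the set of partitions of $\{1,2,\dots,n\}$ into $\ell$ blocks of size $m$, so that
\[
v = \frac{n!}{(m!)^\ell \ell!}.
\]

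Let me think about what strategy the authors would employ here.

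The plan is to mirror the intransitive case: fix the combinatorial model of $\mathcal{P}$, extract a small invariant (the value $v$, a low subdegree, and a low-index conjugacy class) to force $v$ below a polynomial in $n$, and then clear the finitely many survivors by \magma. Here $\mathcal{P}$ is the set of partitions of $\{1,2,\dots,n\}$ into $\ell$ blocks of size $m$, with $n=m\ell$ and $m,\ell\geq 2$, so that
\[
v=\frac{n!}{(m!)^\ell\,\ell!}.
\]
Since $\Soc(G)=\A_n$ is the unique minimal normal subgroup of $G$ and is not contained in the imprimitive subgroup $G_\alpha$, the group $G$ acts faithfully on $\mathcal{P}$; as $G=\A_n$ or $\Sy_n$ for $n\geq 7$, I may take $g=(1,2,3)\in G$.

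First I would obtain a uniform bound. The element $g$ has prime order $3$ and acts nontrivially on $\mathcal{P}$, so every nontrivial $\langle g\rangle$-orbit has length $3$ and Lemma~\ref{lm:vkGCGg} applies. For $G\in\{\A_n,\Sy_n\}$ with $n\geq 7$ the $3$-cycles form a single class of size $|G|/|\Cen_G(g)|=n(n-1)(n-2)/3$, so Lemma~\ref{lm:vkGCGg} gives $(v-1)(v-2)\leq c\,k(k-1)(k-2)$ with $c=n(n-1)(n-2)/3$. Combined with $v\geq k^2-3k+4$ (Lemma~\ref{lm:vk}) and $k>3$, Lemma~\ref{lm:aninequation} yields
\[
v<(c+2)^2=\left(\tfrac{n(n-1)(n-2)}{3}+2\right)^2<n^6.
\]
The denominator $(m!)^\ell\ell!$ is largest, hence $v$ smallest, when $\ell=2$, and even there $v=\tfrac12\binom{2m}{m}$ grows like $2^n/\sqrt{n}$, which dominates $n^6$. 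So this inequality already confines $(m,\ell)$ to a finite list, and for $\ell\geq 3$ the faster growth of $v$ makes the surviving list shorter still.

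To keep that list short in the bottleneck case $\ell=2$, I would sharpen the bound using the smallest nontrivial subdegree from Lemma~\ref{lm:subdegree-imprimitive}, namely $d_1=m^2$ (valid for $m\geq 3$). Then Lemma~\ref{lm:vkGad}(b) gives $(v-1)(v-2)\leq k(k-1)(k-2)\,m^2(m^2-1)$, and Lemma~\ref{lm:aninequation} with $c=m^2(m^2-1)$ forces $v<(m^2(m^2-1)+2)^2<m^8$; since $\tfrac12\binom{2m}{m}\geq m^8$ for all moderately large $m$, only finitely many $m$ remain. For each surviving $(m,\ell)$ I would use the divisibility constraints of Lemma~\ref{lm:brlambda2}(b) and (c) and of Lemma~\ref{lm:vkGad} (together with $k\leq\lfloor\sqrt{v}\rfloor+2$) to cut the admissible $k$ down to a handful, and then run the same \magma{} elimination as in the intransitive subsection: identify the candidate block stabilisers among the subgroups of the relevant index, form the putative block orbits, and exhibit a $3$-subset lying in two blocks, contradicting $\lambda=1$.

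The main obstacle is the case $\ell\geq 3$, for which no subdegree formula is supplied: there I would either fall back on the cruder $3$-cycle bound above (absorbing a few more pairs $(m,\ell)$, all with $n$ bounded by roughly $20$, into the \magma{} computation), or else establish the analogue of Lemma~\ref{lm:subdegree-imprimitive} for the single-swap suborbit of length $\binom{\ell}{2}m^2$ by an intersection-matrix argument in the same style. The second, more bookkeeping-heavy difficulty is organising the finite \magma{} verification uniformly across all surviving parameter pairs, exactly as in the intransitive case.
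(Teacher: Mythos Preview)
Your proposal is correct and follows essentially the same route as the paper: the $3$-cycle centralizer bound via Lemma~\ref{lm:vkGCGg} and Lemma~\ref{lm:aninequation} forces $v<n^6/9$, a monotonicity argument on $(m\ell)!/\bigl((m!)^\ell\ell!(m\ell)^6\bigr)$ reduces to an explicit finite list of pairs $(m,\ell)$, and then the subdegree divisibility of Lemma~\ref{lm:vkGad} (using Lemma~\ref{lm:subdegree-imprimitive} for $\ell=2$ and direct \magma{} computation of subdegrees for the handful of $\ell\geq3$ survivors) eliminates every candidate $k$ arithmetically. Your anticipated block-stabiliser search in \magma{} turns out to be unnecessary here: unlike the intransitive case, no triple $(m,\ell,k)$ survives the divisibility test, so the proof ends one step earlier than you expect.
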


\begin{proof}
Suppose that $G_{\alpha}=(\Sy_m \wr \Sy_{\ell}) \cap G$,  with $n=m\ell$, and $m\geq 2 $ and $\ell\geq 2$.  Then
\begin{equation}\label{eq:imprimv}
v=\frac{ \vert \Sy_{m\ell} \vert }{ \vert \Sy_m \vert ^\ell  \vert  \Sy_{\ell} \vert }=\frac{(m\ell)!}{m!^\ell\ell!}
\end{equation}

Let $g=(1,2,3) \in G$.  
Since $G$ acts faithfully on $\mathcal{P}$, it follows that 
$\langle g\rangle$ has an orbit of length $3$ on $\mathcal{P}$.
Note that $\Cen_{\Sy_n}(g)=\langle g\rangle \times \Sym(\{4,5,\dots,n\})$.
Since $n\geq 7$, the group $ \Sym(\{4,5,\dots,n\})$ contains odd permutations, and hence  $ \vert \Cen_{\Sy_n}(g) \vert / \vert \Cen_{\A_n}(g) \vert =2$. 
Therefore, $ \vert G \vert / \vert \Cen_G(g) \vert =n(n-1)(n-2)/3$. 
This together with Lemma~\ref{lm:vkGCGg} and Lemma~\ref{lm:aninequation} prove the following   
\begin{equation}\label{eq:vn6/9}
v\leq \left(\frac{n(n-1)(n-2)}{3}+2 \right)^2< \frac{n^6}{9}. 
\end{equation} 
Consequently, we have
\begin{equation}\label{eq:imprimv<n6/9}
\frac{(m\ell)!}{ m!^\ell\ell!} < \frac{ (m\ell)^{6} }{9}.
\end{equation}

Suppose first that $m\leq 20$ and $\ell \leq 20$. 
Then computation shows that the pairs $(m,\ell)$ satisfying~\eqref{eq:imprimv<n6/9} are 
\begin{itemize}
\item[\rm (i)] $\ell=2$ and $4\leq m\leq 15$ (noting that $n=m\ell\geq 7$); 
\item[\rm (ii)]  $(3, 3)$, $(4, 3)$, $(5, 3)$, $(6, 3)$, $(2,4)$, $(3,4)$, $(2,5)$, $(2,6)$, $(2,7)$. 
\end{itemize} 
For those  $(m,\ell)$ satisfying (i), the nontrivial subdegrees of $G$ are determined by Lemma~\ref{lm:subdegree-imprimitive}, and for those $(m,\ell)$ satisfying (ii),  the nontrivial subdegrees of $G$ can be obtained by computation in \magma~\cite{Magma}.
However, computation shows that for those $(m,\ell)$ satisfying (i) or (ii), there exists no integer $k$ with $3< k\leq \sqrt{v}+2$ such that  Lemma~\ref{lm:vkGad} holds.

Therefore, $(m,\ell)$ is not a pair such that $m\leq 20$ and $\ell \leq 20$. Let 
\[
f(m,\ell)=\frac{(m\ell)!}{m!^\ell (\ell!)}/\frac{ (m\ell)^{6} }{9}=\frac{9(m\ell)!}{m!^\ell \ell! (m\ell)^{6} }.
\] 
Then
\[
\begin{split}
\frac{f(m+1,\ell)}{f(m,\ell)}&=\frac{((m+1)\ell)!}{(m\ell)!}\frac{m!^\ell }{(m+1)!^{\ell }}\frac{(m\ell)^{6}}{(m+1)^{6}\ell^6}=\frac{((m+1)\ell)!}{(m\ell)!}\cdot \frac{1}{(m+1)^\ell}\cdot \frac{m^{6}}{(m+1)^{6}},
\\
\frac{f(m,\ell+1)}{f(m,\ell)}&=\frac{(m(\ell+1))!}{(m\ell)!}\frac{m!^\ell }{m!^{\ell+1}}\frac{\ell!}{(\ell+1)!}\frac{(m\ell)^{6}}{m^{6}(\ell+1)^6}=\frac{(m(\ell+1))!}{(m\ell)!} \cdot \frac{1}{m! } \cdot \frac{\ell^6}{(\ell+1)^7}.
\end{split} 
\]
Since
\[ 
\frac{((m+1)\ell)!}{(m\ell)!} =\prod_{i=1}^{\ell}(m\ell+i)> (m\ell)^\ell, 
\]
we have 
\[ 
 \frac{f(m+1,\ell)}{f(m,\ell)} > \frac{(m\ell)^\ell m^{6 }}{(m+1)^{6+\ell}}=\ \ell^\ell\left(\frac{m}{m+1}\right)^{6+\ell }=\left(\frac{\ell^{\frac{\ell}{6+\ell}} m}{m+1}\right)^{6+\ell}.
\]

Suppose that $2\leq \ell \leq 20$. Then  $m\geq 21$ and so
\[
\frac{f(m+1,\ell)}{f(m,\ell)} \geq \left(\frac{\ell^{\frac{\ell}{6+\ell}} m}{m+1}\right)^{6+\ell }   \geq \left(\frac{2^{\frac{2}{6+2}} 21}{21+1}\right)^{6+\ell }   > 1^{6+\ell }=1,
\] 
which implies that if $\ell$ is fixed then  $f(m,\ell)$ is increasing as a funtion of $m$
 where $m\geq 21$ and $2\leq \ell \leq 20$, i.e. $f(m,\ell)\geq f(21,\ell)$. 
Since
\[
\begin{split}
\frac{f(21,\ell+1)}{f(21,\ell)}&=\left(\prod_{i=1}^{21}(21\ell+i) \right)\frac{\ell^6}{21!(\ell+1)^7}=\left(\prod_{i=1}^{20}\frac{21\ell+i}{i}\right)\frac{(21\ell+21)\ell^6}{21(\ell+1)^7} \\
&=\left(\prod_{i=1}^{20}\frac{21\ell+i}{i}\right) \left(\frac{ \ell}{\ell+1}\right)^6
 >42\left(\frac{2}{2+1}\right)^6>1,
 \end{split}
\]
we have $f(21,\ell)\geq f(21,2)>1$ for all $2\leq \ell \leq 20$, and so $f(m,\ell) >1$ for all $m\geq 21$ and $2\leq \ell \leq 20$, contradicting~\eqref{eq:imprimv<n6/9}. 

Suppose $\ell\geq 21$. Then $m\geq 2$ and
\[
\frac{f(m+1,\ell)}{f(m,\ell)} \geq \left(\frac{\ell^{\frac{\ell}{6+\ell}} m}{m+1}\right)^{6+\ell }   \geq \left(\frac{21^{\frac{21}{6+21}} 2}{2+1}\right)^{6+\ell }   > 1^{6+\ell }=1,
\]
which implies that $f(m,\ell)\geq f(2,\ell)$ for all $m\geq 2$ and $\ell \geq 21$.
Since
\[
\frac{f(2,\ell+1)}{f(2,\ell)}= \frac{(2\ell+2)(2\ell+1)\ell^6}{2(\ell+1)^7}= (2\ell+1)\left(\frac{ \ell}{\ell+1}\right)^{6 }   \geq (2\cdot 21+1)\left(\frac{ 21}{21+1}\right)^{6 }>1, 
\]
we have $f(2,\ell)\geq f(2,21)>1$ for all $\ell \geq 21$ and so $f(m,\ell) >1$ for all $m\geq 2$ and $\ell \geq 21$, contradicting~\eqref{eq:imprimv<n6/9}. 
\end{proof}

\subsection{The primitive case} 
In this subsection, we assume that $n\geq 7$ and deal with the cases (c)--(f) in the beginning of Section 3, where $G_\alpha$ acts  primitively on $\{1,2,\dots,n\}$ and $\A_n \not\leq G_\alpha $.

%We first note an important observation. 
%According to~\cite[Theorem~3.3B]{DM-book}, if  $G_\alpha$ contains a permutation $g$ fixing more than $n/2$ points on $\{ 1,2,\dots,n\}$, then $G_\alpha \geq \A_n$, which is not the case. 

\begin{lemma}\label{lm:Anprimcase}
Suppose that $\Soc(G)=\A_n$ with $n\geq 7$.  Then $G_\alpha$ is not of primitive affine,  diagonal,  wreath product  and  almost  case.
\end{lemma}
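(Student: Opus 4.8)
The plan is to reuse, in all four primitive cases simultaneously, the mechanism already exploited for the intransitive and imprimitive cases: bound $v$ from above by an element argument, bound $v=\vert G\vert/\vert G_\alpha\vert$ from below by the smallness of the primitive subgroup $G_\alpha$, and show the two are incompatible for all but finitely many degrees. First I would note that the upper bound
\[
v<\frac{n^{6}}{9}
\]
is available verbatim: taking $g=(1,2,3)\in\A_n\leq G$, the computation $\vert G\vert/\vert\Cen_G(g)\vert=n(n-1)(n-2)/3$ uses only $\Soc(G)=\A_n$ and $n\geq 7$, not the O'Nan--Scott type of $G_\alpha$, so Lemma~\ref{lm:vkGCGg} and Lemma~\ref{lm:aninequation} apply unchanged. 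Since $\A_n$ is transitive on $\mathcal{P}$ we have $v=\vert\A_n\vert/\vert(\A_n)_\alpha\vert\geq n!/(2\vert G_\alpha\vert)$, so it then suffices to find a good upper bound for $\vert G_\alpha\vert$ and to verify $n!/(2\vert G_\alpha\vert)\geq n^{6}/9$ for large $n$.

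In the affine case (c), the diagonal case (d) and the almost simple case (f), the subgroup $G_\alpha$ is itself a primitive permutation group of degree $n$ not containing $\A_n$ and not of product-action type; hence the bound of Mar\'{o}ti on the orders of primitive groups gives $\vert G_\alpha\vert<n^{1+\log_2 n}$, the only exceptions being the Mathieu groups $\M_{11},\M_{12},\M_{23},\M_{24}$ (which can occur only in case (f)). Substituting this into $n!/(2\vert G_\alpha\vert)<n^{6}/9$ yields $n!<\tfrac{2}{9}\,n^{7+\log_2 n}$, and a Stirling estimate shows this inequality fails once $n$ exceeds a small explicit bound $N_0$. This already disposes of the entire diagonal case, since there $n=\vert T\vert^{\ell-1}\geq 60>N_0$; the four Mathieu exceptions give $v=n!/(2\vert\M\vert)$, which vastly exceeds $n^{6}/9$ and are eliminated by inspection. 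Thus only finitely many small degrees remain in cases (c) and (f).

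For the wreath case (e) we have $n=m^{\ell}$ with $m\geq 5$ and $\ell\geq 2$, and $G_\alpha\leq\Sy_m\wr\Sy_\ell$ in product action, whence $\vert G_\alpha\vert\leq (m!)^{\ell}\ell!$ and
\[
v\geq\frac{(m^{\ell})!}{2\,(m!)^{\ell}\,\ell!}.
\]
Using $(m^{\ell})!\geq (m^{\ell}/e)^{m^{\ell}}$, $(m!)^{\ell}\leq m^{m\ell}$ and $\ell!\leq\ell^{\ell}$, one checks that this lower bound is super-exponential in $n$, while $n^{6}/9=m^{6\ell}/9$ is merely polynomial; already the base case $(m,\ell)=(5,2)$ gives $v\geq 10^{16}\gg 25^{6}/9$, and the estimate only improves as $m$ or $\ell$ grows. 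Hence $v>n^{6}/9$ for every admissible $(m,\ell)$, contradicting the upper bound, so case (e) does not arise.

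The residual work — and the main obstacle — is the finite list of small degrees surviving in the affine and almost simple cases, for example $n\in\{7,8,9\}$ (and, depending on $N_0$, a few more) with $G_\alpha$ affine, or $n=10$ with $G_\alpha$ of socle $\A_6\cong\PSL_2(9)$ in its $2$-transitive degree-$10$ action. Here $\vert G_\alpha\vert$ is large relative to $n!$, so $n!/(2\vert G_\alpha\vert)$ can drop below $n^{6}/9$ and the counting inequality gives no contradiction. For each such configuration I would compute $v=\vert G\vert/\vert G_\alpha\vert$ and then extract a contradiction from the arithmetic constraints: the bound $v\geq k^{2}-3k+4$ of Lemma~\ref{lm:vk}, the integrality of $\lambda_2=(v-2)/(k-2)$ and $\lambda_1=(v-1)(v-2)/((k-1)(k-2))$ from Lemma~\ref{lm:brlambda2}, and the subdegree divisibility of Lemma~\ref{lm:vkGad}, resorting to a \magma\ check in the few cases where these do not immediately force $k$ out of range. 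The delicate point throughout is to invoke Mar\'{o}ti's bound with exactly the right exceptional families, so that the asymptotic step is clean and the leftover case analysis stays genuinely finite.
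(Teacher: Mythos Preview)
Your approach is sound and is exactly the alternative route the paper itself sketches in the Remark following the lemma: combine the upper bound $v<n^{6}/9$ (from Lemma~\ref{lm:vkGCGg} and Lemma~\ref{lm:aninequation}) with Mar\'{o}ti's order bound to force $n$ small, then inspect. The paper's \emph{main} proof, however, is quite different and does not use Mar\'{o}ti at all. It observes that since $G_\alpha$ is primitive on $\{1,\dots,n\}$ and does not contain $\A_n$, no $3$-cycle can lie in any point stabiliser (Jordan's theorem, \cite[Theorem~3.3A]{DM-book}), so $g=(1,2,3)$ is fixed-point-free on $\mathcal{P}$ and Lemma~\ref{lm:gfixeno} gives $k\mid v$. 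It then splits on $v\bmod 4$: if $4\mid v$ then Lemma~\ref{lm:vdivisiblebyk4} forces flag-transitivity, contradicting Huber's classification~\cite{H2005}; if $v$ is odd, Kantor's theorem~\cite{K1987} on primitive groups of odd degree pins down $n\in\{7,8\}$ and $v=15$, dispatched by $\lambda_2$; if $v\equiv 2\pmod 4$, a short Sylow-$2$ argument with double transpositions and~\cite[Theorem~3.3B]{DM-book} gives a contradiction. The paper's route trades your residual computer search for structural input (Kantor, Huber); yours is more uniform across the four cases but leaves a longer tail of small degrees.

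Two inaccuracies in your write-up, neither fatal. First, $\M_{11}$ and $\M_{12}$ both give $v=2520$, which is far \emph{below} $11^{6}/9$ and $12^{6}/9$, so they are not eliminated by the inequality as you claim and must instead join the finite list (where the integrality of $\lambda_1$ kills them). Second, Mar\'{o}ti's exceptional family also contains, for $r=1$ and $k\geq 2$, the groups $\A_m$ or $\Sy_m$ acting on $k$-subsets of $\{1,\dots,m\}$; these are almost simple and fall under case~(f), so your assertion that the Mathieu groups are the \emph{only} exceptions in cases (c), (d), (f) is not quite right. Both oversights are absorbed by the $n\leq 13$ analysis (or by checking $v>n^{6}/9$ directly), but should be acknowledged for the argument to be complete.
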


\begin{proof}
Suppose for a contradiction that $G_\alpha$ is one of the cases (c)--(f).
Then $G_\a$ acts primitively on $\{1,2,\dots,n\}$ and $\A_n \not\leq G_\alpha $.
Let $g=(1,2,3)$.
If $g \in G_\beta$ for some $\beta \in \mathcal{P}$, then we  conclude from~\cite[Theorem~3.3A]{DM-book} that $G_\beta \geq \A_n$, a contradiction.
Therefore, $g $ fixes no point in $\mathcal{P}$.
Then $k$ divides $v$ by Lemma~\ref{lm:gfixeno}. 

Suppose that $v$ is odd. From~\cite[Theorem~C]{K1987} we conclude that one of the following holds:
\begin{itemize}
\item $G=\A_7$ and $G_\alpha=\PSL_3(2)$;
\item $G=\A_8$, and $G_\alpha=2^3{:}\SL_3(2)\cong\AGL_3(2)$.
\end{itemize} 
Both of these two cases lead to $v=15$. 
Since $k\geq 4$ and $k$ divides $v$, we obtain $k=5$, which contradicts that $v-2$ is divisible by $k-2$ (see Lemma~\ref{lm:brlambda2}(c)). 
 
Suppose that $v$ is twice an odd integer. 
From Atlas~\cite{Atlas} we conclude $n\neq 7$ or $8$, and hence $n\geq 9$.
Note that $\Sym(\{1,2,\dots,8\})$ has a Sylow $2$-subgroup $P$ generating by the following permutations 
\[ 
(1,2),(3,4),(5,6),(7,8),(1,3)(2,4),(5,6)(7,8),(1,5)(2,6)(3,7)(4,8).
\]
By the Sylow theorems, we can take a Sylow $2$-subgroup $Q$ of $G $ such that $Q \geq P \cap G$. 
Again by the Sylow theorems, together with the transitivity of $G$ on $\mathcal{P}$, we see that there exists some $\beta \in \mathcal{P}$ such that a Sylow $2$-subgroup $R$ of $G_{\beta}$ is contained in $Q $. 
Since $ \vert G \vert / \vert G_\beta \vert =v$, which is twice an odd integer, it follows that $R$ is of index $2$ in $Q$. 
Let $g_1=(1,2)(3,4)$ and $g_2=(1,2)(5,6)$.  
Clearly, both $g_1$ and $g_2 $ are in $P\cap G$ and hence in $Q$.
Note that now $ G_\beta$ acts primitively on $\{1,2,\dots,n\}$.
According to~\cite[Theorem~3.3B]{DM-book}, if $ G_\beta$ contains one of $g_1$ and $g_2$, then $G_\beta \geq \A_n$, a contradiction.
Therefore neither $g_1$ nor $g_2$ is in $G_\beta$.
Now it follows from  $g_1,g_2 \notin R$ and $ \vert Q \vert / \vert R \vert =2$ that  $g_1g_2=(3,4)(5,6) \in R$. 
However, applying~\cite[Theorem~3.3B]{DM-book} again we derive that $G_\alpha \geq \A_n$, a contradiction.

Therefore, $v$ is divisible by $4$.
Then it follows from Lemma~\ref{lm:vdivisiblebyk4} that $\mathcal{D}$ is $G$-flag-transitive.
However, this contradicts the result of Huber~\cite{H2005}. 
\end{proof}

\begin{remark} 
There is an alternative way to prove Lemma~\ref{lm:Anprimcase}. 
It is shown in~\cite[Theorem 1.1]{M2002} that the order of a primitive group of degree $n$ is no more than $n^{1+\lfloor\log_2(n)\rfloor}$ apart from a few exceptions.
Using this upper bound and~\eqref{eq:vn6/9} we  conclude
\[
\frac{n^6}{9}>v>\frac{ \vert \A_m \vert }{ \vert G_\a \vert }\geq \frac{n!}{2n^{1+\lfloor\log_2(n)\rfloor}}\geq \frac{n!}{2n^{ \log_2(n) }}.
\]
Computation shows that the above inequality holds only if $n\leq 13$, and hence we only need to investigate primitive groups of degree at most $13$.   
\end{remark}

\section*{Declarations}

%Some journals require declarations to be submitted in a standardised format. Please check the Instructions for Authors of the journal to which you are submitting to see if you need to complete this section. If yes, your manuscript must contain the following sections under the heading `Declarations':
 
\begin{itemize}
%\item 
\item  This work was supported by the National Natural Science Foundation of China (12071484,12271524,12071023,11971054).The authors are very grateful for the anonymous referees' valuable comments to improve the paper. 
%Fu-Gang Yin is 
\item    
The authors declare they have no financial interests.
%\item Ethics approval 
%\item Consent to participate
%\item Consent for publication
\item Availability of data and materials. Data sharing not applicable                                                                                                                                                                                                                                                                                                                                                                                                                                                                                                                                                                                                                                                                                                                                                                                                                                                                                                                                                                                                                                                                                                                                                                                                                                                                                                                                                                                                                                                                                                                                                                                                                                                                                                                                                                                                                                                                                                                                                                                                                                                                                                                                                                                                                                                                                                                                                                                                                                                                                                                                                                                                                                                                                                                                                                                                                                                                                                                                                                                                                                                                                                                                                                                                                                                                                                                                                                                                                                                                                                                                                                                                                                                                                                                                                                                                                                                                                                                                                                                                                                                                                                                                                                                to this article as no datasets were generated or analysed during the current study.
%\item Code availability 
%\item Authors' contributions
\end{itemize}

\end{document}